\newtheorem{theorem}{Theorem}[section]
\newtheorem{lemma}[theorem]{Lemma}
\newtheorem{proposition}[theorem]{Proposition}
\newtheorem{corollary}[theorem]{Corollary}
\theoremstyle{definition}
\newtheorem{definition}[theorem]{Definition}
\newtheorem{example}[theorem]{Example}
\newtheorem{remark}[theorem]{Remark}
\newtheorem{obs}[theorem]{Observation}
\theoremstyle{definition}
\numberwithin{equation}{section}
\tikzset{
    ncbar angle/.initial=90,
    ncbar/.style={
        to path=(\tikztostart)
        -- ($(\tikztostart)!#1!\pgfkeysvalueof{/tikz/ncbar angle}:(\tikztotarget)$)
        -- ($(\tikztotarget)!($(\tikztostart)!#1!\pgfkeysvalueof{/tikz/ncbar angle}:(\tikztotarget)$)!\pgfkeysvalueof{/tikz/ncbar angle}:(\tikztostart)$)
        -- (\tikztotarget)
    },
    ncbar/.default=0.5cm,
}
\tikzset{square left brace/.style={ncbar=0.5cm}}
\tikzset{square right brace/.style={ncbar=-0.5cm}}
\tikzset{round left paren/.style={ncbar=0.5cm,out=120,in=-120}}
\tikzset{round right paren/.style={ncbar=0.5cm,out=60,in=-60}}
\def \con {\sim}
\definecolor{laura}{rgb}{.4, 0, .6}
\begin{document}
\pagestyle{headings}

 \title{Fundamental groups of Hamming graphs}
    \author{K. Behal, T. Chih}

\maketitle

\vspace*{-2em}

%% Abstract %%

%% Abstract %%
\begin{abstract}
Recently there has been growing interest in discrete homotopies and homotopies of graphs beyond treating graphs as 1-dimensional simplicial spaces.  One such type of homotopy is $\times$-homotopy.  Recent work by Chih-Scull has developed a homotopy category, a fundamental group for graphs under this homotopy, and a way of computing covers of graphs that lift homotopy via this fundamental group.  In this paper, we compute the fundamental groups of all Hamming graphs, show that they are direct products of cyclic groups, and use this result to describe some $\times$-homotopy covers of Hamming graphs.

\end{abstract}

\section{Introduction}

In topology, the fundamental group of a space is a tool that is instrumental in classifying spaces up to homeomorphism.  Various fundamental groups have been developed for graphs using different generalizations. 
  This research follows work done by Chih and Scull \cite{CS3} where the generators of the group are based circuits and  $\times$-homotopy is the analogue of classical homotopy.  Chih-Scull defined the fundamental group for graphs under this notion, and computed several examples. $\times$-homotopy is a well studied notion of homotopy with connections to topology and applications in graph theory \cite{CS1, CS3, CS2, Docht1, Docht2}.

In particular, we compute the $\times$-homotopy fundamental groups of all Hamming graphs. We start by continuing the investigation of pleats (or stiff representatives) that Chih-Scull conducted. They showed that since fundamental groups are homotopy invariant, we can compute a fundamental group for any graph by computing the fundamental group of its pleat. Thus the fundamental groups of stiff graphs are of particular importance.  We show here that Hamming graphs with elements or dimension greater than 2 are pleats.  We show that the generators of a Hamming graph are the generators of the fundamental groups of the complete subgraphs associated with each dimension.  We also show that these generators commute and are distinct, thus showing that the fundamental group of a Hamming graph is a direct product of cyclic groups.

Our paper is organized in the following manner. In Section \ref{S:prelim}, we define properties of graphs and provide information about notation. In Section \ref{S:H(d,2)}, we discuss whether Hamming graphs are pleats and investigate the special properties of Hamming graphs with 2 elements ($q=2$). In Section \ref{H(d,q)}, we explore properties of Hamming graphs more generally, compute the fundamental groups for all Hamming graphs, and use the fundamental group to describe some covers.% Lastly, in Section \ref{covers}, we provide examples of covers of Hamming graphs. 

\section{Preliminaries}\label{S:prelim}

In this section we establish some preliminary facts and definitions that are used throughout the paper. Standard graph theory terminology and notation can be found in \cite{BM} and group theory terminology and 
notation can be found in \cite{gallian2016contemporary}.  We note that in this paper, we will allow single loops, but not multiple edges between vertices.

We begin with the definition of a graph, and in particular a Hamming graph, as the  focus of this paper.

\begin{definition}[\cite{BM, AGT}]
    A \textbf{graph} is a pair $G = (V, E)$, where V is a set of vertices (singular: vertex), and E is a set of  one or two sets of \textbf{vertices} which we call \textbf{edges}. We denote the vertex set and edge set of a graph $G$ as $V(G)$ and $E(G)$ respectively. We denote the existence of an edge between two vertices ${v, u} \in E(G)$ by writing $v \sim u$.  In this case that $u\sim u$,we say that $u$ is \textbf{looped}.

\end{definition}

\begin{definition}
    Let $d$ and $q$ be positive integers, and let $S$ be a set with $q$ elements. The \textbf{Hamming graph} is the graph with vertex set $V(H(d,q)) = \{(x_1, \ldots, x_d) : x_i\in  S\}$ where two vertices are adjacent $(x_1, \ldots, x_d) \sim (y_1, \ldots, y_d)$ if they differ in exactly one coordinate. 
\end{definition}

Notice that this notation for vertices can be cumbersome to read and write, and so we denote $(x_1, \ldots, x_i, \ldots, x_d)$ as $(x_1,-,x_i,-,x_d)$ or $(-,x_i,-)$ if it is understood what the entries for the tuples are.  Similarly, we will use $\mathbf{0}$ to denote the all $0$'s tuple.

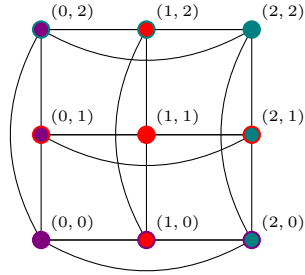
\begin{figure}[h]\label{F:h23}

\[\begin{tikzpicture}[scale=1.4]
\foreach \x in {0,...,2}{
\foreach \y in {0,...,2}{

\draw (\x,\y) --node[above right]{\tiny $(\x, \y)$} (\x, \y);

}
};

\foreach \x in {0,...,2}{
\draw (\x, 0)--(\x, 2);
\draw (\x,0)edge[bend left](\x,2);
};

\draw (0,0)--(2,0);
\draw (0,2)--(2,2);
\draw (0,1)--(2,1);

\foreach \y in {0,...,2}{
\draw (0, \y)--(2, \y);
\draw (0,\y)edge[bend right](2,\y);
};

\draw[fill, violet] (0,0) circle (2pt);
\draw[fill, red] (1,0) circle (2pt);
\draw[fill, teal] (2,0) circle (2pt);
\draw[fill, violet] (0,1) circle (2pt);
\draw[fill, red] (1,1) circle (2pt);
\draw[fill, teal] (2,1) circle (2pt);
\draw[fill, violet] (0,2) circle (2pt);
\draw[fill, red] (1,2) circle (2pt);
\draw[fill, teal] (2,2) circle (2pt);

\draw[ thick, violet] (0,0) circle (2.2pt);
\draw[thick,  red] (0,1) circle (2.2pt);
\draw[ thick,  teal] (0,2) circle (2.2pt);
\draw[thick,  violet] (1,0) circle (2.2pt);
\draw[thick,  red] (1,1) circle (2.2pt);
\draw[thick,  teal] (1,2) circle (2.2pt);
\draw[thick,  violet] (2,0) circle (2.2pt);
\draw[thick,  red] (2,1) circle (2.2pt);
\draw[thick,  teal] (2,2) circle (2.2pt);
 \end{tikzpicture}\]

\caption{The Hamming graphs $H(2,3)$.}

\end{figure}

Also  note that for any Hamming graph, if we pick an index $i$ and fix the entries for every other index, the $d$ vertices we obtain form a complete graph.  In particular, the subgraphs induced by $\{(0,-,x,-,0): x\in [d]\}$ are complete graphs.  In Figure 1, fixing $(a,-)$ or $(-,b)$ the resulting trio of vertices form a complete graph.

Next, we establish some preliminaries of $\times$-homotopy.

\begin{definition}
        A \textbf{homomorphism} of graphs $f : G \to H$ is given by a set map $f : V (G) \mapsto V (H)$ such that if $v_1 \sim v_2 \in E(G)$ then $f(v_1) \sim f(v_2) \in E(H)$.

\end{definition}

\begin{definition}[\cite{CS1}]
    Let $f, g : G \to H$ be graph homomorphisms. We say $f$ and $g$ are a \textbf{spider pair} if there exists a single vertex $v$ such that $f(u) = g(u)$ for all $u \neq v$ and $f(v) \neq g(v)$. When we replace $f$ with $g$ we refer to it as a spider move. 
\end{definition}

Since fundamental groups are homotopy classes of closed walks (circuits), we define some of the basic definitions used in this construction.

\begin{definition}[\cite{CS2}]
    Let $\alpha = (v_0v_1v_2\cdots v_n)$ be a walk in G. We say that $\alpha$ is \textbf{prunable} if $v_i = v_{i+2}$ for some $i$. We define a \textbf{prune} of $\alpha$ to be given by a walk $\alpha'$ obtained by deleting the vertices $v_i$ and $v_{i+1}$ from the walk when $v_i = v_{i+2}$: 
    if $\alpha = (v_0v_1v_2\cdots v_{i-1}v_iv_{i+1}v_iv_{i+3}\cdots v_n)$, then the prune of $\alpha$ is $\alpha' = (v_0v_1v_2\cdots v_{i-1}v_iv_{i+3}\cdots v_n)$.

\end{definition}

\begin{definition}
    Given graphs $G, H$, we define the \textbf{categorical product} denoted $G\times H$ to be a graph where: 
    \begin{itemize}
    \item  A vertex is a  pair $(v, w)$ where $v \in V(G)$ and $w \in V(H)$.
\item An edge is defined by $(v_1, w_1) \con (v_2, w_2) \in E(G \times H) $ for $v_1 \con v_2 \in E(G)$ and $w_1 \con w_2 \in E(H)$.   
    \end{itemize}
\end{definition}

\begin{example}\label{E:examprod}

Let $G$ be the graph on two adjacent looped vertices:  $V(G) = \{ 0, 1\}$ and $E(G) = \{ 0\con 0,1 \con 1, 0 \con 1\}$.  Let  $H= K_2 $ with $V(H) = \{ a, b\} $ and $E(H) = \{ a\con b\}$.   Then $G \times H$ is isomorphic to the cyclic graph $C_4$:

$$\begin{tikzpicture}

\draw[fill] (1,.5) circle (2pt);
\draw (1,.5) --node[below]{$a$} (1,.5);
\draw[fill] (2,.5) circle (2pt);
\draw (2,.5) --node[below]{$b$} (2,.5);
\draw (1,.5) -- (2,.5);

\draw[fill] (.5,1) circle (2pt);
\draw (.5,1) --node[left]{0} (.5,1);
\draw[fill] (.5,2) circle (2pt);
\draw (.5,2) --node[left]{1} (.5,2);
\draw (.5,1) -- (.5,2);
\draw (.5,1)  to[in=-40,out=220,loop, distance=.7cm] (.5,1);
\draw (.5,2)  to[in=-220,out=40,loop, distance=.7cm] (.5,2);

\draw[fill] (1,1) circle (2pt);
\draw (1,1) --node[below]{\tiny $(0,a)$} (1,1);
\draw[fill] (1,2) circle (2pt);
\draw (2,1) --node[below]{\tiny $(0,b)$} (2,1);
\draw[fill] (2,1) circle (2pt);
\draw (1,2) --node[above]{\tiny $(1,a)$} (1,2);
\draw[fill] (2,2) circle (2pt);
\draw (2,2) --node[above]{\tiny $(1,b)$} (2,2);

\draw (1,1) -- (2,2);
  \draw (1,1) -- (2,1);
\draw (1,2) -- (2,1);
\draw (1,2) -- (2,2);

\end{tikzpicture}$$

\end{example}

\begin{definition}[\cite{CS1}]% definition of homotopic
    Given $f, g : G \to H$, we say that $f$ is $\times$-homotopic to $g$, written $f \simeq g$, if there is a graph homomorphism $\Lambda : G \times I_n^\ell \to H$ such that $\Lambda|_{G\times \{0\}} = f$ and $\Lambda|_{G\times \{n\}} = g$, where $I_n^\ell$ denotes a looped path graph.
\end{definition}

Since $\times$-homotopy is the only form of homotopy discussed in this paper, we will use $\times$-homotopy and homotopy interchangeably.

\begin{definition}[\cite{CS1}]%paths are homotopic, we call them equivalent
    Suppose that $P_n$ denotes the path graph with $n$ vertices and
$\alpha   : P_{n} \to G, \alpha, \beta   : P_n \to G$ are walks in $G$ from $x$ to $y$, so $\alpha(0) = \beta  (0) = x$ and $\alpha(n) = \beta  (n) = y$. We say $\alpha$ and
$\beta  $ are homotopic rel endpoints if $\alpha  \simeq   \beta  $ in such a way that all intermediate walks $\lambda     |_{G\times  \{i\}}$ are
also walks from $x$ to $y$, so the endpoints of the walk remain fixed: $\lambda     |_{\{0\}\times  I_m} = x$ and $\lambda     |_{\{n\}\times  I_m} = y$.
%can spider everything except your endpoints
\end{definition}

\begin{remark}

The above definition is defined when two maps from $P_n$ to $G$ are homotopic rel-endpoints. In \cite{CS2} this definition is expanded and it is shown that two walks $\alpha, \beta$ are homotopic rel endpoints if we can transform $\alpha$ to $\beta$  through a series of spider moves, prunes and anti-prunes.  The definition and results are technical in nature and thus not included here for ease of reading. Throughout this paper, we refer to such walks as \textbf{equivalent}.

\end{remark}

\begin{example}\label{E:prune}

 Let $\alpha={(acbce)}$ and  $\beta={(ade)}$ be walks in the graph below. Then  $\alpha \simeq \beta$  since we have a prune of   $\alpha$ to $\alpha' = (ace)$ and then a spider move to  $\beta= (ade)$.

$$\begin{tikzpicture}
\draw[fill] (0,0) circle (2pt);
\draw (0,0) --node[left]{$d$} (0,0);
\draw[fill] (.5,.707) circle (2pt);
\draw (.5,.707) --node[above]{$a$} (.5,.707);
\draw[fill] (.5,-.707) circle (2pt);
\draw (.5,-.707) --node[below]{$e$} (.5,-.707);
\draw[fill] (1,0) circle (2pt);
\draw (1,0) --node[below]{$c$} (1,0);
\draw[fill] (2, 0) circle (2pt);
\draw (2,0) --node[above]{$b$} (2,0);

\draw (0,0) -- (.5,.707) -- (1,0) -- (.5,-.707)--(0,0);
\draw (1,0)--(2,0);

\draw[ultra thick, orange] (.5, .707) -- (1, 0) -- (2, 0) -- (1,0)--(.5, -.707);
%\draw[blue, ultra thick] (.5, .707) -- (0,0) -- (.5, -.707);

\end{tikzpicture}
\ \ \ \ 
\begin{tikzpicture}
\draw[fill] (0,0) circle (2pt);
\draw (0,0) --node[left]{$d$} (0,0);
\draw[fill] (.5,.707) circle (2pt);
\draw (.5,.707) --node[above]{$a$} (.5,.707);
\draw[fill] (.5,-.707) circle (2pt);
\draw (.5,-.707) --node[below]{$e$} (.5,-.707);
\draw[fill] (1,0) circle (2pt);
\draw (1,0) --node[below]{$c$} (1,0);
\draw[fill] (2, 0) circle (2pt);
\draw (2,0) --node[above]{$b$} (2,0);

\draw (0,0) -- (.5,.707) -- (1,0) -- (.5,-.707)--(0,0);
\draw (1,0)--(2,0);

\draw[ultra thick, orange] (.5, .707) -- (1, 0) --(.5, -.707);
%\draw[blue, ultra thick] (.5, .707) -- (0,0) -- (.5, -.707);

\end{tikzpicture}
\ \ \ \ 
\begin{tikzpicture}
\draw[fill] (0,0) circle (2pt);
\draw (0,0) --node[left]{$d$} (0,0);
\draw[fill] (.5,.707) circle (2pt);
\draw (.5,.707) --node[above]{$a$} (.5,.707);
\draw[fill] (.5,-.707) circle (2pt);
\draw (.5,-.707) --node[below]{$e$} (.5,-.707);
\draw[fill] (1,0) circle (2pt);
\draw (1,0) --node[below]{$c$} (1,0);
\draw[fill] (2, 0) circle (2pt);
\draw (2,0) --node[above]{$b$} (2,0);

\draw (0,0) -- (.5,.707) -- (1,0) -- (.5,-.707)--(0,0);
\draw (1,0)--(2,0);

%\draw[orange, ultra thick] (.5, .707) -- (1, 0) -- (2, 0) -- (1,0)--(.5, -.707);
\draw[ultra thick, orange] (.5, .707) -- (0,0) -- (.5, -.707);

\end{tikzpicture}
$$

\end{example}

\begin{definition}
    Given a graph $G$ and $v\in G$, the \textbf{neighborhood} of $v$ in $G$ denoted $N(v)$ is the set of all vertices adjacent to $v$.
\end{definition}

\begin{definition}[\cite{GMDG, BonatoCaR, CS1}] \label{D:nerve}  We say that a graph $G$ is {\bf stiff} or a \textbf{pleat}  if there are no two distinct vertices $v, w$ such that $N(v) \subseteq N(w)$. 

\end{definition}

Finally we include some definitions and results about covers of graphs and their relation to fundamental groups.

\begin{definition}[\cite{Angluin1}]\label{D:Cover}
A \textbf{covering map} is a  graph morphism $f:\widetilde{G}\to G$ such that: 
\begin{itemize} 
\item $f$ is a surjection on vertices and \item  given any $v \in V(G)$  and $\tilde{v}\in f^{-1}(v)$,  $f$ induces a bijection  $N(\tilde{v})\to N(v)$,  \end{itemize}
\end{definition}

The notion of covers in graphs as defined by Angluin in \cite{Angluin1} is extended in \cite{CS3} to describe covers which allow $\times$-homotopy of walks, and of $\times$-homotopy more generally, to lift as in  classical algebraic topology.  In order to achieve this, we introduce an extension  of the two-neighborhood, which is a set of walks, rather than vertices.

\begin{definition}[\cite{CS3}]\label{D:2Nbd}
For any vertex $v \in V(G)$ we define the extended neighborhood $N_2(v)$ to be the walks of length $2$ starting at $v$.  
\end{definition}

Our covering  maps induce bijections on these two-neighborhoods.  

\begin{definition}[\cite{CS3}]\label{D:hocover}

A graph morphism between   graphs  $f:\widetilde{G}\to G$ is a \textbf{homotopy covering map} if given any $v\in V(G)$ and $\tilde{v}\in V(\widetilde{G})$ such that ${f}(\tilde{v})=v$, then $f$ induces a bijection $N_2(\tilde{v}) \to N_2 (v)$ and this bijection respects endpoints in the sense that walks in $N_2(\tilde{v})$ have the same endpoint if and only if their corresponding walks in $N_2(v)$ do also.  
\end{definition}

\begin{obs}  \label{O:HcoverisCover}
If $f:  \widetilde{G} \to G$ is a homotopy covering map  then it is also a covering map in the sense of Definition \ref{D:Cover}, since the bijection of length two walks requires a bijection of  walks of the form   $(vwv)$ for $w \in N(v)$.   
\end{obs}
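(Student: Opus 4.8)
The plan is to verify the two conditions of Definition~\ref{D:Cover} directly from the homotopy covering hypothesis, using the identification of $N(v)$ with the ``return walks'' $(v,w,v)$ sitting inside $N_2(v)$ that the statement itself flags. The first condition, surjectivity on vertices, I take to be part of the homotopy covering data (as it is built into the covering-map setup), so the substance of the argument, and where I would concentrate, is the second condition: that the induced map $N(\tilde v)\to N(v)$ is a bijection for every $\tilde v\in f^{-1}(v)$.

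First I would set up the bookkeeping. For any vertex $u$, sending $w\in N(u)$ to the length-two walk $(u,w,u)$ identifies $N(u)$ with the set $R(u)\subseteq N_2(u)$ of walks that return to their starting vertex; this map is injective since the middle vertex recovers $w$. It therefore suffices to show that the given bijection $f\colon N_2(\tilde v)\to N_2(v)$ restricts to a bijection $R(\tilde v)\to R(v)$, for then $N(\tilde v)\cong R(\tilde v)\xrightarrow{\ \sim\ }R(v)\cong N(v)$ is the desired neighborhood bijection, and by construction it is exactly $\tilde w\mapsto f(\tilde w)$.

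Next I would prove that $f$ carries $R(\tilde v)$ into $R(v)$ and conversely. The forward inclusion is automatic: since $f$ is a graph morphism with $f(\tilde v)=v$, a return walk $(\tilde v,\tilde w,\tilde v)$ maps to $(v,f(\tilde w),v)$, again a return walk. The reverse inclusion is where the endpoint-respecting clause is essential, and I expect this to be the main obstacle. Given $(v,w,v)\in R(v)$, its unique $f$-preimage $(\tilde v,\tilde w,\tilde u)$ satisfies $f(\tilde u)=v$, but $f(\tilde u)=v$ alone does not force $\tilde u=\tilde v$; this is precisely the possibility that a second sheet folds back onto $v$ and would destroy the neighborhood bijection. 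To rule it out I would compare the preimage against any genuine return walk $(\tilde v,\tilde w',\tilde v)\in R(\tilde v)$: its image ends at $v$, so the two walks $(\tilde v,\tilde w,\tilde u)$ and $(\tilde v,\tilde w',\tilde v)$ have images sharing the endpoint $v$, and the endpoint-respecting property then forces them to share an endpoint in $\widetilde G$, i.e.\ $\tilde u=\tilde v$. Hence the preimage lies in $R(\tilde v)$, giving surjectivity onto $R(v)$; injectivity is inherited from the $N_2$-bijection.

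Finally I would dispose of the degenerate case $N(\tilde v)=\varnothing$: then $N_2(\tilde v)=\varnothing$, so the bijection forces $N_2(v)=\varnothing$, whence $N(v)=\varnothing$ and the empty map is vacuously a bijection. Assembling the pieces, the restricted bijection $R(\tilde v)\to R(v)$ transports through the two identifications to a bijection $N(\tilde v)\to N(v)$, which together with surjectivity on vertices establishes both clauses of Definition~\ref{D:Cover}.
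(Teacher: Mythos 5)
Your argument is correct and follows the same route as the paper, which justifies the observation in one line by restricting the $N_2$-bijection to the return walks $(vwv)$. You fill in the one genuinely nontrivial detail the paper glosses over --- that the endpoint-respecting clause is what forces the preimage of a return walk $(v,w,v)$ to itself be a return walk $(\tilde v,\tilde w,\tilde v)$ rather than ending on a different sheet --- so your write-up is a faithful, more careful version of the intended proof.
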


\begin{example}\label{Example:HCover} 
The following is a homotopy covering map: 

$$
\begin{tikzpicture}[scale=.7]
\node(G) at (0,0){\begin{tikzpicture}[scale=.7]
\draw (0,1)\foreach \x in {1,...,5}{--({sin(72*\x)}, {cos(72*\x)})};
\draw (-.951, 0.309)--(0,0)--(.951,0.309);

\draw[blue] ({sin(72*0)}, {cos(72*0)}) -- node[above]{$b$} ({sin(72*0)}, {cos(72*0)});
\draw[red] ({sin(72*1)}, {cos(72*1)}) -- node[right]{$c$} ({sin(72*1)}, {cos(72*1)});
\draw[orange] ({sin(72*2)}, {cos(72*2)}) -- node[below right]{$d$} ({sin(72*2)}, {cos(72*2)});
\draw[teal] ({sin(72*3)}, {cos(72*3)}) -- node[below left]{$e$} ({sin(72*3)}, {cos(72*3)});
\draw[violet] ({sin(72*4)}, {cos(72*4)}) -- node[left]{$a$} ({sin(72*4)}, {cos(72*4)});
\draw[cyan] (0, 0) -- node[below]{$b'$} (0, 0);

\draw[fill, blue] ({sin(72*0)}, {cos(72*0)}) circle (2pt);
\draw[fill, red] ({sin(72*1)}, {cos(72*1)}) circle (2pt);
\draw[fill, orange] ({sin(72*2)}, {cos(72*2)}) circle (2pt);
\draw[fill, teal] ({sin(72*3)}, {cos(72*3)}) circle (2pt);
\draw[fill, violet] ({sin(72*4)}, {cos(72*4)}) circle (2pt);
\draw[fill, cyan] (0, 0) circle (2pt);

\node at (0, -1.5){$G$};

\end{tikzpicture}};

\node(T) at (-8,0){\begin{tikzpicture}[scale=0.7]
\draw (0,2)\foreach \x in {1,...,10}{--({2*sin(36*\x)}, {2*cos(36*\x)})};
\draw (-1.176, 1.618)--(0,1.236)--(1.176,1.618);
\draw (-1.176, -1.618)--(0,-1.236)--(1.176,-1.618);

\draw[blue] ({2*sin(36*0)}, {2*cos(36*0)}) -- node[above]{$b_1$} ({2*sin(36*0)}, {2*cos(36*0)});
\draw[red] ({2*sin(36*1)}, {2*cos(36*1)}) -- node[above right]{$c_1$} ({2*sin(36*1)}, {2*cos(36*1)});
\draw[orange] ({2*sin(36*2)}, {2*cos(36*2)}) -- node[right]{$d_1$} ({2*sin(36*2)}, {2*cos(36*2)});
\draw[teal] ({2*sin(36*3)}, {2*cos(36*3)}) -- node[right]{$e_1$} ({2*sin(36*3)}, {2*cos(36*3)});
\draw[violet] ({2*sin(36*4)}, {2*cos(36*4)}) -- node[below right]{$a_2$} ({2*sin(36*4)}, {2*cos(36*4)});
\draw[blue] ({2*sin(36*5)}, {2*cos(36*5)}) -- node[below]{$b_2$} ({2*sin(36*5)}, {2*cos(36*5)});
\draw[red] ({2*sin(36*6)}, {2*cos(36*6)}) -- node[below left]{$c_2$} ({2*sin(36*6)}, {2*cos(36*6)});
\draw[orange] ({2*sin(36*7)}, {2*cos(36*7)}) -- node[left]{$d_2$} ({2*sin(36*7)}, {2*cos(36*7)});
\draw[teal] ({2*sin(36*8)}, {2*cos(36*8)}) -- node[left]{$e_2$} ({2*sin(36*8)}, {2*cos(36*8)});
\draw[violet] ({2*sin(36*9)}, {2*cos(36*9)}) -- node[above left]{$a_1$} ({2*sin(36*9)}, {2*cos(36*9)});
\draw[cyan] (0,1.236) -- node[below]{$b'_1$}(0,1.236);
\draw[cyan] (0,-1.236) -- node[above]{$b'_2$}(0,-1.236);

\foreach \x in {0,...,1}{\draw[fill, blue] ({2*sin(180*\x)}, {2*cos(180*\x)}) circle (2pt);}
\foreach \x in {0,...,1}{\draw[fill, red] ({2*sin(180*\x+36)}, {2*cos(180*\x+36)}) circle (2pt);}
\foreach \x in {0,...,1}{\draw[fill, orange] ({2*sin(180*\x+2*36)}, {2*cos(180*\x+2*36)}) circle (2pt);}
\foreach \x in {0,...,1}{\draw[fill, teal] ({2*sin(180*\x+3*36)}, {2*cos(180*\x+3*36)}) circle (2pt);}
\foreach \x in {0,...,1}{\draw[fill, violet] ({2*sin(180*\x+4*36)}, {2*cos(180*\x+4*36)}) circle (2pt);}
\draw[fill, cyan] (0,1.236) circle (2pt);
\draw[fill, cyan] (0,-1.236) circle (2pt);

\node at (0, -3.5){$\widetilde{G}$};

\end{tikzpicture}};

\draw[->] (T) --node[above]{$f$} (G);

\end{tikzpicture}
$$

Where the covering map $f:\tilde{G}\to G$ maps $x_i\mapsto x$.  One can verify that this is a group homomorphism. Here we see that the diamond $abcb'$ lifts to a diamond in $\widetilde{G}$, and thus the two-paths also lift in a way that respects endpoints.
%Thus if we consider vertices $a$ and $a_1 \in f^{-1}(a)$ and  compare the extended neighbourhoods  $N_2(a)$  and $N_2(a_1)$, we observe that any walk of length $2$ that ends at $a$ has the form $(ava)$ for some $v \in N(a)$, and lifts uniquely to a walk  $(a_1 \tilde{v} a_1)$ in $\widetilde{G}$.  The other possible walks are   $(abc), (ab'c)$ and $( aed)$ and it is easy to see that they also lift uniquely, with endpoints matching in $G$ if and only if they match in $G$.   Extended neighbourhoods of other vertices match similarly.  

Finally, we introduce the notion of a \textbf{fundamental group}, which in this context consists of $\times$-homotopy classes of circuits.  Classifying the fundamental group for Hamming graphs will the the main goal of this paper.  We also introduce the notion of a universal cover of a graph.  In \cite{CS3}, it is shown that quotienting the universal cover by subgroups of the fundamental group of a graph produces homotopy covers, as in classical algebraic topology.

\end{example}

\begin{definition}[\cite{CS2}]\label{D:FundGroup}
    Given a connected graph $G$ and vertex $v\in V(G)$, the \textbf{fundamental group} of $G$ denoted $\Pi(G)$ is the collection of $\times$-homotopy equivalence classes of closed walks or circuits starting and ending at $v$, with concatenation as the operation.
\end{definition}

\begin{example}
    Consider the graph $G$ from Example \ref{Example:HCover}:
    \[\begin{tikzpicture}[scale=.7]
\draw (0,1)\foreach \x in {1,...,5}{--({sin(72*\x)}, {cos(72*\x)})};
\draw (-.951, 0.309)--(0,0)--(.951,0.309);

\draw[blue] ({sin(72*0)}, {cos(72*0)}) -- node[above]{$b$} ({sin(72*0)}, {cos(72*0)});
\draw[red] ({sin(72*1)}, {cos(72*1)}) -- node[right]{$c$} ({sin(72*1)}, {cos(72*1)});
\draw[orange] ({sin(72*2)}, {cos(72*2)}) -- node[below right]{$d$} ({sin(72*2)}, {cos(72*2)});
\draw[teal] ({sin(72*3)}, {cos(72*3)}) -- node[below left]{$e$} ({sin(72*3)}, {cos(72*3)});
\draw[violet] ({sin(72*4)}, {cos(72*4)}) -- node[left]{$a$} ({sin(72*4)}, {cos(72*4)});
\draw[cyan] (0, 0) -- node[below]{$b'$} (0, 0);

\draw[fill, blue] ({sin(72*0)}, {cos(72*0)}) circle (2pt);
\draw[fill, red] ({sin(72*1)}, {cos(72*1)}) circle (2pt);
\draw[fill, orange] ({sin(72*2)}, {cos(72*2)}) circle (2pt);
\draw[fill, teal] ({sin(72*3)}, {cos(72*3)}) circle (2pt);
\draw[fill, violet] ({sin(72*4)}, {cos(72*4)}) circle (2pt);
\draw[fill, cyan] (0, 0) circle (2pt);

\node at (0, -1.5){$G$};

\end{tikzpicture}\]

The fundamental group $G$ consists of classes of walks starting and ending at $a$.  Note that the walks $(abcdea)$, $(ab'cdea)$ and $(abcb'abcdea)$ are equivalent via spider moves and prunes.  It is straightforward to see that $(abcdea)$ generates the fundamental group, and so $\Pi(G)\cong \mathbb{Z}$.

\end{example}

\begin{definition}[\cite{CS3}]\label{D:UC}
Given a graph $G$ the \textbf{universal cover} of $G$ is the graph $U$ up to isomorphism such that if $\rho:\tilde{G}\to G$ is a covering and   $\tilde{v}\in f^{-1}(v)$.  Then there is a unique map  $\tilde{\rho}: U\to \widetilde{G}$ such that $\rho=f\circ\tilde{\rho}$ and $\tilde{\rho}([v])=\tilde{v}$, and the map $\tilde{\rho}$ is a homotopy cover.

\end{definition}

\begin{example}\label{Example:UCover}
    The universal cover $U$ of $G$ from Example \ref{Example:HCover} may be depicted as follows: $$\begin{tikzpicture}[scale=0.8]

\draw (-7.5,0)--(-5,0)--(-4,.5)--(-3,0)--(0,0)--(1,.5)--(2,0)--(5,0)--(6,.5)--(7,0)--(7.5,0);
\draw (-5,0)--(-4,-.5)--(-3,0);
\draw (0,0)--(1,-.5)--(2,0);
\draw (5,0)--(6,-.5)--(7,0);
\node at (-8,0){$\cdots$};
\node at (8,0){$\cdots$};

\foreach \x in {-1,...,1}{ \draw[violet, fill] (\x*5, 0) circle (2pt);}
\foreach \x in {-1,...,1}{ \draw[blue, fill] (\x*5+1, 0.5) circle (2pt);}
\foreach \x in {-1,...,1}{ \draw[cyan, fill] (\x*5+1, -0.5) circle (2pt);}
\foreach \x in {-1,...,1}{ \draw[red, fill] (\x*5+2, 0) circle (2pt);}
\foreach \x in {-1,...,1}{ \draw[teal, fill] (\x*5-1, 0) circle (2pt);}
\foreach \x in {-1,...,1}{ \draw[orange, fill] (\x*5-2, 0) circle (2pt);}

\draw (0,0)--node[below]{\tiny $a$} (0,0);
\draw (1,0.5)--node[above]{\tiny $ab$} (1,0.5);
\draw (1,-0.5)--node[below]{\tiny $ab'$} (1,-0.5);
\draw (2,0)--node[below]{\tiny $a\overline{b}c$} (2,0);
\draw (3,0)--node[below]{\tiny $a\overline{b}cd$} (3,0);
\draw (4,0)--node[below]{\tiny $a\overline{b}cde$} (4,0);
\draw (5,0)--node[below]{\tiny $a\overline{b}cdea$} (5,0);
\draw (6,0.5)--node[above]{\tiny $a\overline{b}cdeab$} (6,0.5);
\draw (6,-0.5)--node[below]{\tiny $a\overline{b}cdeab'$} (6,-0.5);
\draw (7,0)--node[below right]{\tiny $a\overline{b}cdea\overline{b}c$} (7,0);

\draw (-1,0) --node[below]{\tiny $ae$} (-1,0);
\draw (-2,0) --node[below]{\tiny $aed$} (-2,0);
\draw (-3,0) --node[below]{\tiny $aedc$} (-3,0);
\draw (-4,0.5) --node[above]{\tiny $aedcb$} (-4,0.5);
\draw (-4,-0.5) --node[below]{\tiny $aedcb'$} (-4,-0.5);
\draw (-5,0) --node[below]{\tiny $aedc\overline{b}a$} (-5,0);
\draw (-6,0) --node[above]{\tiny $aedc\overline{b}ae$} (-6,0);
\draw (-7,0) --node[below]{\tiny $aedc\overline{b}aed$} (-7,0);

\node at (0,-1.5){$U$};

\end{tikzpicture}$$

The vertices are homotopy classes of walks starting at $a$, noting that $abc\simeq ab'c$, and two walks are adjacent if the differ by one step.  We note that $\rho:U\to G$ maps $(a\cdots x)\mapsto x$.

One may likewise observe that if we consider the cover $\tilde{G}$ from Example \ref{Example:HCover}, that there is a homotopy covering map $\tilde{\rho}:U\to \tilde{G}$ so that $\rho = f\circ \tilde{\rho}$.

$$
\begin{tikzpicture}
\node(G) at (3,0){\begin{tikzpicture}
\draw (0,1)\foreach \x in {1,...,5}{--({sin(72*\x)}, {cos(72*\x)})};
\draw (-.951, 0.309)--(0,0)--(.951,0.309);

\draw[blue] ({sin(72*0)}, {cos(72*0)}) -- node[above]{$b$} ({sin(72*0)}, {cos(72*0)});
\draw[red] ({sin(72*1)}, {cos(72*1)}) -- node[right]{$c$} ({sin(72*1)}, {cos(72*1)});
\draw[orange] ({sin(72*2)}, {cos(72*2)}) -- node[below right]{$d$} ({sin(72*2)}, {cos(72*2)});
\draw[teal] ({sin(72*3)}, {cos(72*3)}) -- node[below left]{$e$} ({sin(72*3)}, {cos(72*3)});
\draw[violet] ({sin(72*4)}, {cos(72*4)}) -- node[left]{$a$} ({sin(72*4)}, {cos(72*4)});
\draw[cyan] (0, 0) -- node[below]{$b'$} (0, 0);

\draw[fill, blue] ({sin(72*0)}, {cos(72*0)}) circle (2pt);
\draw[fill, red] ({sin(72*1)}, {cos(72*1)}) circle (2pt);
\draw[fill, orange] ({sin(72*2)}, {cos(72*2)}) circle (2pt);
\draw[fill, teal] ({sin(72*3)}, {cos(72*3)}) circle (2pt);
\draw[fill, violet] ({sin(72*4)}, {cos(72*4)}) circle (2pt);
\draw[fill, cyan] (0, 0) circle (2pt);

\node at (0, -1.5){$G$};

\end{tikzpicture}};

\node(T) at (-3,0){\begin{tikzpicture}[scale=0.7]
\draw (0,2)\foreach \x in {1,...,10}{--({2*sin(36*\x)}, {2*cos(36*\x)})};
\draw (-1.176, 1.618)--(0,1.236)--(1.176,1.618);
\draw (-1.176, -1.618)--(0,-1.236)--(1.176,-1.618);

\draw[blue] ({2*sin(36*0)}, {2*cos(36*0)}) -- node[above]{$b_1$} ({2*sin(36*0)}, {2*cos(36*0)});
\draw[red] ({2*sin(36*1)}, {2*cos(36*1)}) -- node[above right]{$c_1$} ({2*sin(36*1)}, {2*cos(36*1)});
\draw[orange] ({2*sin(36*2)}, {2*cos(36*2)}) -- node[right]{$d_1$} ({2*sin(36*2)}, {2*cos(36*2)});
\draw[teal] ({2*sin(36*3)}, {2*cos(36*3)}) -- node[right]{$e_1$} ({2*sin(36*3)}, {2*cos(36*3)});
\draw[violet] ({2*sin(36*4)}, {2*cos(36*4)}) -- node[below right]{$a_2$} ({2*sin(36*4)}, {2*cos(36*4)});
\draw[blue] ({2*sin(36*5)}, {2*cos(36*5)}) -- node[below]{$b_2$} ({2*sin(36*5)}, {2*cos(36*5)});
\draw[red] ({2*sin(36*6)}, {2*cos(36*6)}) -- node[below left]{$c_2$} ({2*sin(36*6)}, {2*cos(36*6)});
\draw[orange] ({2*sin(36*7)}, {2*cos(36*7)}) -- node[left]{$d_2$} ({2*sin(36*7)}, {2*cos(36*7)});
\draw[teal] ({2*sin(36*8)}, {2*cos(36*8)}) -- node[left]{$e_2$} ({2*sin(36*8)}, {2*cos(36*8)});
\draw[violet] ({2*sin(36*9)}, {2*cos(36*9)}) -- node[above left]{$a_1$} ({2*sin(36*9)}, {2*cos(36*9)});
\draw[cyan] (0,1.236) -- node[below]{$b'_1$}(0,1.236);
\draw[cyan] (0,-1.236) -- node[above]{$b'_2$}(0,-1.236);

\foreach \x in {0,...,1}{\draw[fill, blue] ({2*sin(180*\x)}, {2*cos(180*\x)}) circle (2pt);}
\foreach \x in {0,...,1}{\draw[fill, red] ({2*sin(180*\x+36)}, {2*cos(180*\x+36)}) circle (2pt);}
\foreach \x in {0,...,1}{\draw[fill, orange] ({2*sin(180*\x+2*36)}, {2*cos(180*\x+2*36)}) circle (2pt);}
\foreach \x in {0,...,1}{\draw[fill, teal] ({2*sin(180*\x+3*36)}, {2*cos(180*\x+3*36)}) circle (2pt);}
\foreach \x in {0,...,1}{\draw[fill, violet] ({2*sin(180*\x+4*36)}, {2*cos(180*\x+4*36)}) circle (2pt);}
\draw[fill, cyan] (0,1.236) circle (2pt);
\draw[fill, cyan] (0,-1.236) circle (2pt);

\node at (0, -3.5){$\widetilde{G}$};

\end{tikzpicture}};

\node(U) at (0,5){\begin{tikzpicture}[scale=0.8]

\draw (-7.5,0)--(-5,0)--(-4,.5)--(-3,0)--(0,0)--(1,.5)--(2,0)--(5,0)--(6,.5)--(7,0)--(7.5,0);
\draw (-5,0)--(-4,-.5)--(-3,0);
\draw (0,0)--(1,-.5)--(2,0);
\draw (5,0)--(6,-.5)--(7,0);
\node at (-8,0){$\cdots$};
\node at (8,0){$\cdots$};

\foreach \x in {-1,...,1}{ \draw[violet, fill] (\x*5, 0) circle (2pt);}
\foreach \x in {-1,...,1}{ \draw[blue, fill] (\x*5+1, 0.5) circle (2pt);}
\foreach \x in {-1,...,1}{ \draw[cyan, fill] (\x*5+1, -0.5) circle (2pt);}
\foreach \x in {-1,...,1}{ \draw[red, fill] (\x*5+2, 0) circle (2pt);}
\foreach \x in {-1,...,1}{ \draw[teal, fill] (\x*5-1, 0) circle (2pt);}
\foreach \x in {-1,...,1}{ \draw[orange, fill] (\x*5-2, 0) circle (2pt);}

\draw (0,0)--node[below]{\tiny $a$} (0,0);
\draw (1,0.5)--node[above]{\tiny $ab$} (1,0.5);
\draw (1,-0.5)--node[below]{\tiny $ab'$} (1,-0.5);
\draw (2,0)--node[below]{\tiny $a\overline{b}c$} (2,0);
\draw (3,0)--node[below]{\tiny $a\overline{b}cd$} (3,0);
\draw (4,0)--node[below]{\tiny $a\overline{b}cde$} (4,0);
\draw (5,0)--node[below]{\tiny $a\overline{b}cdea$} (5,0);
\draw (6,0.5)--node[above]{\tiny $a\overline{b}cdeab$} (6,0.5);
\draw (6,-0.5)--node[below]{\tiny $a\overline{b}cdeab'$} (6,-0.5);
\draw (7,0)--node[below right]{\tiny $a\overline{b}cdea\overline{b}c$} (7,0);

\draw (-1,0) --node[below]{\tiny $ae$} (-1,0);
\draw (-2,0) --node[below]{\tiny $aed$} (-2,0);
\draw (-3,0) --node[below]{\tiny $aedc$} (-3,0);
\draw (-4,0.5) --node[above]{\tiny $aedcb$} (-4,0.5);
\draw (-4,-0.5) --node[below]{\tiny $aedcb'$} (-4,-0.5);
\draw (-5,0) --node[below]{\tiny $aedc\overline{b}a$} (-5,0);
\draw (-6,0) --node[above]{\tiny $aedc\overline{b}ae$} (-6,0);
\draw (-7,0) --node[below]{\tiny $aedc\overline{b}aed$} (-7,0);

\node at (0,-1.5){$U$};

\end{tikzpicture}};

\draw[->] (T) --node[below ]{$f$} (G);
\draw[->] (U) --node[above right]{$\rho$}(G);
\draw[->, dashed] (U) --node[above left]{$\exists\,\, !\,\, \tilde{\rho}$} (T);

\end{tikzpicture}
$$
\end{example}

\begin{proposition}[\cite{CS3}]\label{P:classifycover}  Any connected homotopy cover $\widetilde{G}$  is of the form  $U/S \cong \widetilde{G}$ where  $S \leq \Pi(G)$ is the subgroup $\{s | s\tilde{\rho} = \tilde{\rho}\}$. 
\end{proposition}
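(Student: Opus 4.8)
The plan is to mirror the classical classification of covering spaces, with the fundamental group acting on the universal cover by deck transformations. First I would record the structural facts about $U$ that are implicit in Definition \ref{D:UC}: identifying the vertices of $U$ with equivalence classes (rel endpoints, in the sense of the Remark following Definition \ref{D:hocover}) of walks out of a fixed basepoint $*$, the group $\Pi(G)$ acts on $U$ by pre-concatenating a closed-walk representative of $s \in \Pi(G)$ onto each walk-class. The two prerequisites I would establish are that this action is free and that the canonical projection $U \to G$ realizes $G$ as the orbit space $U/\Pi(G)$; equivalently, two vertices of $U$ lie in a common $\Pi(G)$-orbit if and only if they have the same image in $G$. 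These are exactly the statements that make $U$ deserve the name ``universal cover,'' and they follow from the unique lifting of walks that the homotopy-cover condition guarantees.

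Next, given a connected homotopy cover $f : \widetilde{G} \to G$, I would invoke the universal property in Definition \ref{D:UC} to obtain the canonical map $\tilde{\rho} : U \to \widetilde{G}$ with $\rho = f \circ \tilde{\rho}$, which is itself a homotopy covering map. I would then check that $S = \{\, s \in \Pi(G) \mid s\tilde{\rho} = \tilde{\rho} \,\}$ is a subgroup: the condition says precisely that $s$ stabilizes the morphism $\tilde{\rho}$ under the deck action, and the stabilizer of a point (here, of a map) under a group action is always a subgroup, so closure, identity, and inverses are immediate.

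The heart of the proof is to show that $\tilde{\rho}$ descends to an isomorphism $\overline{\rho} : U/S \to \widetilde{G}$. By the defining property of $S$, the map $\tilde{\rho}$ is constant on each $S$-orbit, so it factors through a well-defined graph morphism $\overline{\rho}$ on the quotient $U/S$. Surjectivity on vertices is inherited from $\tilde{\rho}$, which is surjective because it is a homotopy cover (Observation \ref{O:HcoverisCover} together with Definition \ref{D:Cover}). For injectivity, suppose $\tilde{\rho}(x) = \tilde{\rho}(y)$ for vertices $x, y$ of $U$. Applying $f$ gives $\rho(x) = \rho(y)$, so $x$ and $y$ have the same image in $G$ and hence, by the orbit description above, $y = t x$ for a unique $t \in \Pi(G)$. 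I would then argue that $t$ in fact fixes the whole map $\tilde{\rho}$, not merely the single value at $x$, so that $t \in S$ and $x, y$ determine the same point of $U/S$. Finally, I would verify that $\overline{\rho}$ respects the extended neighbourhoods $N_2$ of Definition \ref{D:2Nbd} with the endpoint condition of Definition \ref{D:hocover}, so that the bijection of vertices upgrades to an isomorphism of graphs.

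The step I expect to be the main obstacle is the promotion of the local agreement ``$t\tilde{\rho}$ and $\tilde{\rho}$ agree at the one vertex $x$'' to the global equality $t\tilde{\rho} = \tilde{\rho}$ needed to conclude $t \in S$. This is a rigidity statement: two lifts of the cover that agree at a single vertex must agree everywhere, given that $\widetilde{G}$ is connected. Proving it requires propagating the agreement along walks, using the unique lifting of length-two walks furnished by the $N_2$-bijection in Definition \ref{D:hocover}, so that equality spreads from $x$ across every vertex reachable in the connected cover. Once this rigidity lemma and the free-orbit description of the fibers of $\tilde{\rho}$ are in hand, the remaining verifications reduce to routine bookkeeping with walk-classes and their concatenations.
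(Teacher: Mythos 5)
This proposition is quoted from Chih--Scull \cite{CS3} and the paper offers no proof of it, so there is no internal argument to compare yours against. Your outline is the standard Galois-type classification of covers transported to the $\times$-homotopy setting -- realize $V(U)$ as rel-endpoint classes of based walks, let $\Pi(G)$ act by concatenation, check the action is free and fiber-transitive, and descend $\tilde{\rho}$ to $U/S$ -- and this is essentially the route the cited source takes; your identification of the rigidity step (two lifts of a cover over a connected base agreeing at one vertex agree everywhere, proved by propagating along walks via the $N_2$-bijection of Definition \ref{D:hocover}) as the crux is exactly right. The one thing to make explicit if you were to write this out in full is that $t\tilde{\rho}$ is again a lift of $\rho$ through $f$ (i.e. $f\circ\tilde{\rho}\circ t=\rho$ because $t$ is a deck transformation of $\rho$), since that is what licenses applying the rigidity lemma to the pair $t\tilde{\rho}$, $\tilde{\rho}$; and the final upgrade from a vertex bijection to a graph isomorphism needs the quotient $U/S$ to be given its induced edge set, with the $N_2$-condition checked on both $\overline{\rho}$ and its inverse.
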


\begin{proposition}[\cite{CS3}]\label{P:index}
If $\widetilde{G} = U/S$ for a subgroup $S \leq \Pi(G)$ then the size of any preimage $f^{-1} (v)$ is the same as the index of  the subgroup $S \leq \Pi(G)$.  
\end{proposition}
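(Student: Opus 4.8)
The plan is to reduce the statement to two independent facts: first, that every fibre of a covering map over a connected graph has the same cardinality, and second, that the fibre over the fixed basepoint $v_0$ used to build $U$ is in natural bijection with the coset space of $S$ in $\Pi(G)$. Granting these, the fibre over an arbitrary vertex $v$ has size $[\Pi(G):S]$, which is exactly the index of $S$, and we are done.

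For the first fact, recall by Observation~\ref{O:HcoverisCover} that the homotopy covering map $f:\widetilde{G}=U/S\to G$ is in particular a covering map in the sense of Definition~\ref{D:Cover}. Suppose $v\sim w$ in $G$. For each $\tilde{v}\in f^{-1}(v)$, the induced bijection $N(\tilde{v})\to N(v)$ contains a unique preimage of $w$, yielding a vertex $\tilde{w}\in f^{-1}(w)$ adjacent to $\tilde{v}$; the assignment $\tilde{v}\mapsto\tilde{w}$ and its symmetric counterpart built from $N(\tilde{w})\to N(w)$ are mutually inverse, so $|f^{-1}(v)|=|f^{-1}(w)|$. Since $G$ is connected, any two vertices are joined by a walk, and applying this equality edge by edge along such a walk shows that all fibres of $f$ have equal cardinality.

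For the second fact, recall that $U$ is modeled with vertices the equivalence classes of walks starting at $v_0$, the projection $\rho:U\to G$ sending such a class to its endpoint. Hence $\rho^{-1}(v_0)$ is precisely the set of equivalence classes of closed walks at $v_0$, which is $\Pi(G)$ by Definition~\ref{D:FundGroup}. The group $\Pi(G)$ acts on $U$ by prepending a loop, $[\gamma]\cdot[\alpha]=[\gamma\ast\alpha]$; this action preserves endpoints, hence respects $\rho$, and restricts to a free action of $S$ on each fibre. The quotient $\widetilde{G}=U/S$ then has as its fibre over $v_0$ exactly the set of $S$-orbits in $\rho^{-1}(v_0)=\Pi(G)$. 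Since $S$ acts by multiplication on $\Pi(G)$, these orbits are the cosets of $S$, so $|f^{-1}(v_0)|=[\Pi(G):S]$. Combined with the first fact, $|f^{-1}(v)|=[\Pi(G):S]$ for every vertex $v$.

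I expect the main obstacle to be making the identification in the second fact fully precise: one must verify that passing to the quotient $U/S$ genuinely replaces each fibre $\rho^{-1}(w)$ by its set of $S$-orbits and does not merge vertices lying over distinct vertices of $G$, which holds precisely because the $S$-action preserves $\rho$, and that the resulting action on $\rho^{-1}(v_0)=\Pi(G)$ is honestly multiplication once the concatenation conventions of Definition~\ref{D:FundGroup} are fixed. A secondary point is confirming the orbit count is independent of $w$; here this is supplied cleanly by the first fact, though one could instead argue directly that each fibre $\rho^{-1}(w)$ is a $\Pi(G)$-torsor after choosing a reference walk from $v_0$ to $w$, forcing the orbit count to equal the index for every $w$.
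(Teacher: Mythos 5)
This proposition is quoted from \cite{CS3} and the paper gives no proof of its own, so there is no internal argument to compare yours against. On its own terms your argument is correct and is the standard covering-space proof: the local bijections $N(\tilde{v})\to N(v)$ from Definition~\ref{D:Cover} (via Observation~\ref{O:HcoverisCover}) give mutually inverse maps between fibres over adjacent vertices, hence constant fibre cardinality over a connected base; and the basepoint fibre of $U$ is $\Pi(G)$ acted on by left multiplication, so its image in $U/S$ is the coset space of $S$. The only point to flag is that you invoke the explicit model of $U$ as homotopy classes of walks based at $v_0$, whereas this paper only records the universal property (Definition~\ref{D:UC}); that model is indeed how \cite{CS3} constructs $U$ (and is implicit in the notation $\tilde{\rho}([v])=\tilde{v}$ and in the deck-transformation description of $S$ in Proposition~\ref{P:classifycover}), so your identification of $\rho^{-1}(v_0)$ with $\Pi(G)$ and of the $S$-orbits with cosets is legitimate, but strictly speaking it imports that construction rather than deriving it from anything stated here. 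Your closing caveats about verifying that the quotient only collapses vertices within a single fibre, and about fixing the concatenation convention, are exactly the right details to check and both go through.
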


\begin{example}
    Consider $G, \tilde{G}$ and $U$ from Examples \ref{Example:HCover}, \ref{Example:UCover}.   We see that $\tilde{G}\cong U/\langle (abcdea)^2\rangle$.
\end{example}

Lastly, in our computation of the fundamental group of Hamming graphs, we utilize the fundamental group of the complete graph.  This result and proof appears in an older version of \cite{CS3}, which may still be viewed via the arxiv.  We reproduce the result and proof below.

\begin{proposition}\cite{CS3}\label{P:Kn}
$\Pi(K_n)\cong \mathbb{Z}$ for $n=3$ and $ \mathbb{Z}/2$ for $n\geq 4$.
\end{proposition}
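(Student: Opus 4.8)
The plan is to compute $\Pi(K_n)$ directly from the combinatorial description of equivalence (spider moves, prunes, and anti-prunes) recorded in the Remark, by exhibiting a small set of invariants together with a reduction procedure that puts every closed walk into a normal form. Label the vertices $\{0,1,\dots,n-1\}$, take $0$ as the basepoint, and write $\gamma=(0\,1\,2\,0)$ for the basic triangle loop. Two invariants do the bulk of the work. First, since a spider move preserves the length of a walk while a prune or anti-prune changes it by $\pm 2$, length mod $2$ descends to a homomorphism $\varepsilon\colon\Pi(K_n)\to\mathbb{Z}/2$; as $\gamma$ has odd length and the constant walk has length $0$, this already shows $\gamma\not\simeq 1$ in every $K_n$. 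Second, when $n=3$ I would define a winding number by lifting to the ordinary universal cover of the $3$-cycle, namely the bi-infinite path with $k\mapsto k\bmod 3$; the net displacement divided by $3$ gives a homomorphism $\Pi(K_3)\to\mathbb{Z}$ sending $\gamma\mapsto 1$.

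The key structural step is a reduction lemma: every closed walk based at $0$ is equivalent to a power of a triangle loop through $0$. Given a closed walk $w=(v_0 v_1\cdots v_m)$ with $m\ge 4$, I consider three consecutive vertices $v_{i-1},v_i,v_{i+1}$ (indices read cyclically). If $v_{i-1}=v_{i+1}$ the walk is already prunable and shortens. Otherwise $v_{i-1}\neq v_{i+1}$, and I attempt the spider move $v_i\mapsto v_{i+2}$: this is legal precisely when $v_{i+2}\notin\{v_{i-1},v_{i+1}\}$ (completeness supplies every needed edge, and $v_{i+2}\neq v_{i+1}$ holds automatically), and it creates the coincidence $v_i=v_{i+2}$, so a subsequent prune shortens $w$ by two. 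Thus if $w$ cannot be shortened then $v_{i+2}=v_{i-1}$ for all $i$, i.e.\ $w$ is periodic of period $3$; together with consecutive-distinctness this forces $w$ to be $\gamma_{0bc}^{\,k}$ for a genuine triangle $\{0,b,c\}$ and some $k$, with $m=3k$. Iterating, every closed walk reduces to such a triangle power.

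For $n\ge 4$ I would then collapse these triangle powers using a fourth vertex. A direct computation inside $K_4$ gives $(0\,1\,2\,0)\simeq(0\,3\,2\,0)\simeq(0\,3\,1\,0)\simeq(0\,2\,1\,0)$, i.e.\ $\gamma\simeq\gamma^{-1}$, so $\gamma^2\simeq 1$; the same relabelling argument shows every triangle loop through $0$ is equivalent to $\gamma$. Hence the reduction lemma leaves only the classes of $1$ and $\gamma$, and since $\varepsilon$ separates them we conclude $\Pi(K_n)\cong\mathbb{Z}/2$.

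For $n=3$ the fourth vertex is unavailable, and this is exactly what changes the answer. In $K_3$, whenever $v_{i-1}\neq v_{i+1}$ the middle vertex $v_i$ is forced to be the unique third vertex, so no nontrivial spider move exists except at an already-present backtrack $v_{i-1}=v_{i+1}$; consequently the reduction can only prune, and the reduced forms are exactly the loops $\gamma^k$ (with $k\in\mathbb{Z}$, the sign recording orientation), with no move relating distinct $k$. Checking that the winding number is unchanged by prunes and by the only available spider moves (which occur at backtracks and contribute zero net displacement) shows it is a well-defined complete invariant, giving $\Pi(K_3)\cong\mathbb{Z}$. I expect the main obstacle to be making the reduction lemma fully rigorous: one must handle the cyclic index bookkeeping near the basepoint (which vertices may be moved without disturbing $v_0=v_m=0$) and verify that the spider-then-prune step can always be applied until only a triangle power remains, rather than stalling on some irreducible configuration.
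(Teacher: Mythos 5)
This proposition is imported from \cite{CS3} and the paper gives no proof of its own, so there is no internal argument to compare yours against; the closest the paper comes is reusing the two invariants you identify (the length-parity argument appears in the discussion following Example \ref{H(2,3)}, and your winding number is essentially the infinite cyclic cover that Lemma \ref{L:Order} builds for $H(2,3)$). Judged on its own, your argument is essentially correct and complete. The reduction lemma is the right engine: in $K_n$ the spider move $v_i\mapsto v_{i+2}$ is legal exactly when $v_{i+2}\neq v_{i-1}$ (adjacency in $K_n$ being mere distinctness, and $v_{i+2}\neq v_{i+1}$ automatic), and if $v_{i+2}=v_i$ you may prune at $i+1$ instead, so an unshortenable walk indeed satisfies $v_{i+2}=v_{i-1}$ throughout and is a triangle power; the $K_4$ computation $(0120)\simeq(0320)\simeq(0310)\simeq(0210)$ is valid and gives $\gamma^2\simeq 1$, while in $K_3$ the unique common neighbour of two distinct vertices forces all spider moves to occur at backtracks, so the winding number survives and separates the powers of $\gamma$. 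The one thing you must repair is the phrase ``indices read cyclically'': these are \emph{based} loops, so the endpoints $v_0=v_m$ may not be moved and the shortening argument must run only over interior indices $1\le i\le m-2$ (with the boundary prune at $i=m-1$ handled separately); as you note yourself, this is pure bookkeeping, and one checks that an interior-irreducible walk has period $3$ with $v_1,v_2\neq 0$, forcing $3\mid m$ and the triangle-power normal form. With that adjustment, and the routine verification that concatenation of normal forms behaves as $\gamma^j*\gamma^k\simeq\gamma^{j+k}$, the proof stands.
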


\begin{proof}
Any element of $\Pi(K_n)$ is represented by a walk $(vv_1v_2v_3\dots v)$.  Since $v_i \sim v$ for all $v_i \neq v$, this is equivalent to $(vv_1v_2vv_2v_3vv_3v_4v\dots) = (vv_1v_2v)*(vv_2v_3v)*(vv_3v_4v)*\dots)$.  Thus every element is a product of 3-cycles.  

Now choose a 3-cycle $\beta = (vv_1v_2v)$.  Since $n \geq 4$ there exists a $v_3$ which is not $v, v_1, v_2$ and then $\beta = (vv_1v_2v) = (vv_3v_2v) = (vv_3v_1v) = (vv_2v_1v) = \beta^{-1}$.  Given any other 3-cycle $\gamma = (vv_iv_jv)$, if $v_i \neq v_2$ then we have spider moves $\gamma = (vv_iv_jv) = (vv_iv_2v) = (vv_1v_2v)= \beta$.   If $v_i = v_2$ then we have $\gamma = (vv_2v_jv) =(vv_2v_1v) = \beta^{-1}$.  Thus every 3-cycle is equal to $\beta = \beta^{-1}$ and $\beta^2 = e$.   So $\Pi(K_n)\cong \mathbb{Z}/2$.
\end{proof}

\section{The Hamming graph $H(d,2)$.}\label{S:H(d,2)}

In this section, we explore the special case where there are two choices for entries of the tuples, or $q=2$, and classify the fundamental groups for these graphs.

\begin{lemma}
     If $d>2$ or $q>2$, then $H(d,q)$ is a pleat.
\end{lemma}
     
\begin{proof}
    We proceed by cases: In the first case, let $d>2$. Consider distinct vertices $(a_1,-,a_n)$ and $(x_1,-,x_n)$. If these vertices differ in all coordinates, observe that we can find a vertex $(x_1,-,a_j,-,a_n)$ such that $(a_1,-,a_n)\sim (x_1,-,a_j,-,a_n)$ and $(x_1,-,x_n) \not\sim (x_1,-,a_j,-,a_n)$. If these vertices share at least one coordinate, observe that for $k\neq j$, they can be written as $(a_1,-,a_k,-,a_j,-,a_n)$ and $(x_1,-,x_k,-,x_j,-,x_n)$, where $a_k=x_k$ but $a_j\neq x_j$. Then we can find a vertex $(a_1,-,b_k,-,a_j,-,a_n)$ such that $(a_1,-,a_k,-,a_j,-,a_n)\sim (a_1,-,b_k,-,a_j,-,a_n)$ and $(x_1,-,x_k,-,x_j,-,x_n)\not\sim (a_1,-,b_k,-,a_j,-,a_n)$. Thus, we have found a vertex in the neighborhood of $(a_1,-,a_n)$ that is not in the neighborhood of $(x_1,-,x_n)$, so $H(q,d)$ is a pleat when $d>2$. 

   In the second case, let $q>2$. Suppose we have vertices $(-,a,-,b,-)$ and $(-,c,-,d,-)$ where $b\neq d$ but $c$ could equal $a$. Since $q>2$, we can find a vertex $(-,x,-,d,-)$, where $x\neq a, c$.  Note that $(-,c,-,d,-)$ is in the neighborhood of $(-,x,-,d,-)$ but not in the neighborhood of $(-,a,-,b,-)$. So $H(d,q)$ is a pleat when $q>2$. 

   \end{proof}

\begin{example} 
Observe the following graph of $H(2,2)$

\[\begin{tikzpicture}[scale=1.4]

\foreach \x in {0,...,1}{
\foreach \y in {0,...,1}{

\draw (\x,\y) --node[above right]{\tiny $(\x, \y)$} (\x, \y);

}
};
\foreach \x in {0,...,1}{
\draw (\x, 0)--(\x, 1);
};

\draw (0,0)--(1,0);
\draw (0,1)--(1,1);

\foreach \y in {0,...,1}{
\draw (0, \y)--(1, \y);
};

\draw[fill, black] (0,0) circle (2pt);
\draw[fill, black] (1,0) circle (2pt);
\draw[fill, black] (0,1) circle (2pt);
\draw[fill, black] (1,1) circle (2pt);11

 \end{tikzpicture}\]

Note that $(0,0)\sim(0,1)$ and $(0,0)\sim(1,0)$. Also note that $(1,1)\sim(0,1)$ and $(1,1)\sim(1,0)$. Since these two vertices share a neighborhood, $H(2,2)$ is not a pleat. 

\end{example}

   \begin{example}

\[\begin{tikzpicture}[scale=1.4]
\foreach \x in {0,...,2}{
\draw (\x, 0)--(\x, 2);
\draw (\x,0)edge[bend left](\x,2);
};

\draw (0,0)--(2,0);
\draw (0,2)--(2,2);
\draw (0,1)--(2,1);

\foreach \y in {0,...,2}{
\draw (0, \y)--(2, \y);
\draw (0,\y)edge[bend right](2,\y);
};

\draw[fill, black] (0,0) circle (2pt);
\draw[fill, black] (1,0) circle (2pt);
\draw[fill, black] (2,0) circle (2pt);
\draw[fill, black] (0,1) circle (2pt);
\draw[fill, black] (1,1) circle (2pt);
\draw[fill, black] (2,1) circle (2pt);
\draw[fill, black] (0,2) circle (2pt);
\draw[fill, black] (1,2) circle (2pt);
\draw[fill, black] (2,2) circle (2pt);

 \end{tikzpicture}\]
Observe in the above graph of $H(2,3)$ that for any distinct vertices $u, v$, since $u, v$ differ in at least one coordinate, there is always at least one vertex in the neighborhood of $u$ but not $v$.   Therefore, for all $u$ and $w$, $N(u) \not\subseteq N(w)$ so $H(2,3)$ is a pleat.

   \end{example}

We present a technical lemma describing the behavior of closed walks in $H(d,2)$.

\begin{lemma}\label{L:H2nDimensionReduction}
Let $W$ be a walk in $H(d,2)$ such that $W$ starts and ends at the origin.  Then $W$ is equivalent to a walk in $H(d-1,2)$ (as the subgraph of $H(d,2)$ induced by vertices whose last coordinate is $0$).
\end{lemma}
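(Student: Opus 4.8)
The plan is to regard $H(d,2)$ as two copies of $H(d-1,2)$: writing each vertex as $(u,\varepsilon)$, where $u$ records the first $d-1$ coordinates and $\varepsilon\in\{0,1\}$ is the last coordinate, the \emph{bottom face} $\varepsilon=0$ is the copy of $H(d-1,2)$ in which we want to land, and the \emph{top face} $\varepsilon=1$ is the other copy. Since $q=2$, the adjacency rule forces a dichotomy on each edge of $W$: a step either fixes $\varepsilon$ and changes exactly one of the first $d-1$ coordinates (a \emph{horizontal} step, a genuine edge of $H(d-1,2)$ in whichever face we are in), or it fixes $u$ and flips $\varepsilon$ (a \emph{vertical} step). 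In particular, once the walk is in the top face it cannot change $\varepsilon$ to any other nonzero value --- this is exactly where $q=2$ is used --- so every maximal run of consecutive top-face vertices is a horizontal walk $a_0\sim a_1\sim\cdots\sim a_k$ in $H(d-1,2)$, entered by a vertical step from $(a_0,0)$ and exited by a vertical step to $(a_k,0)$.

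I would argue by induction on the number $m$ of vertices of $W$ lying in the top face. If $m=0$ then $W$ already lies in the bottom face, which is the stated copy of $H(d-1,2)$, and we are done. If $m>0$, pick a maximal top-face run; because $W$ begins and ends at the origin (height $0$), this run is never at either end of $W$, so it appears as a segment
\[
(a_0,0),\,(a_0,1),\,(a_1,1),\,\dots,\,(a_k,1),\,(a_k,0)
\]
inside $W$, with the flanking height-$0$ vertices $(a_0,0)$ and $(a_k,0)$ retained from the vertical entry and exit steps.

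The key step is to slide the entry vertical step rightward across the run by a sequence of spider moves. Concretely, on the length-two subwalk $(a_i,0),(a_i,1),(a_{i+1},1)$ I replace the middle vertex $(a_i,1)$ by $(a_{i+1},0)$; this is a legal spider move because $a_i\sim a_{i+1}$ forces $(a_{i+1},0)\sim(a_i,0)$, while $(a_{i+1},0)\sim(a_{i+1},1)$ is a vertical edge, so the new middle vertex is adjacent to both of its neighbors. Performing this for $i=0,1,\dots,k-1$ in turn transforms the segment into $(a_0,0),(a_1,0),\dots,(a_k,0),(a_k,1),(a_k,0)$, after which the trailing backtrack $(a_k,0),(a_k,1),(a_k,0)$ is prunable and collapses to $(a_k,0)$. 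The resulting segment $(a_0,0),(a_1,0),\dots,(a_k,0)$ lies entirely in the bottom face and splices correctly into the rest of $W$, since its endpoints $(a_0,0)$ and $(a_k,0)$ are unchanged. Thus we obtain a walk equivalent to $W$ with strictly fewer top-face vertices, and the inductive hypothesis finishes the argument.

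I expect the only real checking to be the common-neighbor verification that makes each spider move legal, together with the bookkeeping that all the moves and the final prune are interior to $W$ and hence fix the basepoint; both are routine once the horizontal/vertical dichotomy above is in place. The conceptual content is simply that, when $q=2$, any excursion into the top face can be pushed down onto its ``shadow'' in the bottom face one vertical step at a time.
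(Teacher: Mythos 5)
Your proposal is correct and is essentially the paper's own argument: the spider move that replaces $(a_i,1)$ by $(a_{i+1},0)$ between $(a_i,0)$ and $(a_{i+1},1)$ is exactly the paper's second case, and your terminal prune of $(a_k,0),(a_k,1),(a_k,0)$ is its first case. The only difference is bookkeeping --- you clear an entire maximal top-face run at once and induct on the number of top-face vertices, whereas the paper lowers one vertex at a time starting from the leftmost --- but the moves performed are the same.
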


\begin{proof}
Let $W$ be an arbitrary walk of length $n$ starting and ending in $(0_1,-,0_n)$. We want to show that $W\simeq W'$ where $W'$ is a walk where every vertex in $W'$ has last coordinate 0. If $W=(\mathbf{0}v_1v_2\cdots v_k\cdots \mathbf{0})$ there is a lowest index $i$ such that $v_i=(x_1, -, x_{n-1}, 1)$, that is the $n$th coordinate is 1. If we isolate the part of the walk with $v_i$:

\begin{eqnarray*}
W&=&(\mathbf{0}\cdots v_{i-1}v_iv_{i+1}\cdots \mathbf{0})\\
&=&(\mathbf{0}\cdots (x_1, x_2,-, x_{n-1},0)(x_1, x_2,-, x_{n-1},1)v_{i+1}\cdots \mathbf{0}).
\end{eqnarray*}

The second equality follows from the fact that $i$ is the lowest index such that $v_i$ has a 1 in the $n$th coordinate.  Since $v_i, v_{i-1}$ differ by exactly one coordinate, $v_{i-1}$'s $n$th coordinate must be zero.

We proceed via induction on the number of vertices in $W$ with a 1 in the $n$th coordinate.  There are two cases for $v_{i+1}$. The first case is that $v_{i+1}$ has a 0 in the $n$th coordinate. This would make $v_{i+1}$ equal to  $v_{i-1}$, and  $v_i$ could be pruned. Then $W=(\mathbf{0}\cdots v_{i-1}v_iv_{i+1}\cdots \mathbf{0})\simeq (\mathbf{0}\cdots v_{i-1}v_{i+2}\cdots \mathbf{0})$. Thus the number of vertices in this walk with a 1 in the $n$th coordinate is reduced.

The second case is that $v_{i+1}$ has a 1 in the $n$th coordinate and is therefore different from $v_i$ in some other coordinate. Thus $v_{i+1}=(x_1,-,x_{j-1}, y_j, x_{j+1},-, 1)$, allowing %$v_i$ to be spidered to 

\begin{eqnarray*}
W&=&(\mathbf{0}\cdots (x_1, -,x_j,-, x_{n-1},0)(x_1, -,x_j,-, x_{n-1},1)(x_1, -,y_j,-, x_{n-1},1)\cdots \mathbf{0})\\
&\simeq& (\mathbf{0}\cdots (x_1, -,x_j,-, x_{n-1},0)(x_1, -,y_j,-, x_{n-1},0)(x_1, -,y_j,-, x_{n-1},1)\cdots \mathbf{0}).
\end{eqnarray*}

We see that $(x_1, -,y_j,-, x_{n-1},0)$ differs from $v_{i-1}, v_{i+1}$ by one entry each, and thus this equivalence is a valid spider-move.  In either case, the number of vertices with a 1 in the $n$th coordinate has been reduced.

By induction, $W\simeq W'$ where $W'$ is a walk where every vertex in $W'$ has last coordinate 0.
\end{proof}

\begin{example}
    Consider the following example of $H(3,2)$.

     \[\begin{tikzpicture}[scale=1.4]

\foreach \x in {0,...,1}{
\foreach \y in {0,...,1}{

\draw (\x,\y) --node[above right]{\tiny $(\x, 0, \y)$} (\x, \y);

\draw[fill] (\x,\y) circle (2pt);

}
};
\foreach \x in {0,...,1}{
\draw (\x, 0)--(\x, 1);
};

\draw[ultra thick,green] (0, 0)--(1, 0);
\draw[ultra thick,blue] (0, 1)--(1, 1);

\def\sx{0.5}
\def\sy{0.4}

\foreach \x in {0,...,1}{
\foreach \y in {0,...,1}{

\draw (\x+\sx,\y+\sy) --node[above right]{\tiny $(\x, 1, \y)$} (\x+\sx, \y+\sy);

\draw[fill] (\x+\sx,\y+\sy) circle (2pt);
\draw[ultra thick, green] (\x, \y) -- (\x+\sx, \y+\sy);
}
\draw[ultra thick, blue] (\x, 1) -- (\x+\sx, 1+\sy);
};

\foreach \x in {0,...,1}{
\draw (\x+\sx, 0+\sy)--(\x+\sx, 1+\sy);
};

%\draw (0,0) --node[above right]{\tiny $(0,0,0)$} (0,0);

\draw[ultra thick, green] (0 + \sx,  0+\sy)--(1+\sx, 0+\sy);
\draw[ultra thick, blue] (0 + \sx,  1+\sy)--(1+\sx, 1+\sy);

%\draw[ultra thick, blue] 

%\draw[ultra thick, blue] (0,0)--(1,0)--(2,0) edge[bend left] (0,0);
%\draw[ultra thick, green] (0,0)--(0,1)--(0,2) edge[bend right] (0,0);

 \end{tikzpicture}\]
 \end{example}

Observe the walk $W = (0,0,1)(0,1,1)(1,1,1)(1,0,1)$ pictured above in blue. Note that $(0,0,0),(0,1,1)\sim(0,0,1),(0,1,0)$, so $(0,1,1)$ can be moved to $(0,0,0)$ by a spider move. This also means $(0,0,1)$ can be moved to $(0,1,0)$ by a spider move. Observe that $(1,0,0),(1,1,1)\sim(1,0,1),(1,1,0)$, so $(1,1,1)$ can be moved to $(1,0,0)$ and $(1,0,1)$ can be moved to $(1,1,0)$ by spider moves as well. Then $W \simeq W'$, where $W' = (0,0,0)(0,1,0)(1,1,0)(1,0,0)$ depicted on the ``bottom" of the cube. %We can confirm that all the vertices with a 1 in the last coordinate in $W$ now have a zero in the last coordinate. %The resulting graph is $H(2,2)$ drawn below. 

% \[\begin{tikzpicture}[scale=1.4]

% \foreach \x in {0,...,1}{
% \foreach \y in {0,...,1}{

% \draw (\x,\y) --node[above right]{\tiny $(\x, \y)$} (\x, \y);

% }
% };
% \foreach \x in {0,...,1}{
% \draw (\x, 0)--(\x, 1);
% };

% \draw (0,0)--(1,0);
% \draw (0,1)--(1,1);

% \foreach \y in {0,...,1}{
% \draw (0, \y)--(1, \y);
% };

% \draw[fill, black] (0,0) circle (2pt);
% \draw[fill, black] (1,0) circle (2pt);
% \draw[fill, black] (0,1) circle (2pt);
% \draw[fill, black] (1,1) circle (2pt);

%  \end{tikzpicture}\]
 
The above lemma allows us to reduce the dimension that a closed walk `lives" in.  By applying this lemma with induction, we can show that all closed walks in $H(d,2)$ are trivial.

\begin{proposition}\label{L:H2nTrivial}
Let $W$ be a walk in $H(d,2)$ such that $W$ starts and ends at vertex $v_0=\mathbf{0}$. Then $W\simeq (\mathbf{0})$, which we call trivial. 
\end{proposition}

\begin{proof}
By repeatedly applying Lemma \ref{L:H2nDimensionReduction}, we show that $W\simeq W'$ which is a walk where only the first coordinate has non-zero entries, and such a walk must be of the form $\mathbf{0}(1,0,-,0)\mathbf{0}\cdots (1,0,-,0)\mathbf{0}$ which prunes completely.
\end{proof}

\begin{corollary}\label{C:trivial}
    $\Pi(H(d,2))\cong \{e\}$.
\end{corollary}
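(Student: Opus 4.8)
The plan is to read the corollary directly off Theorem \ref{L:H2nTrivial}, since essentially all of the substantive work has already been done there. By Definition \ref{D:FundGroup}, the group $\Pi(H(d,2))$ is the set of $\times$-homotopy equivalence classes of closed walks based at a fixed vertex, under concatenation. First I would record that $H(d,2)$ is connected: any two vertices are joined by a walk that flips their differing coordinates one at a time, so the fundamental group is well-defined and its isomorphism type is independent of the base vertex. I would then take the base vertex to be $v = \mathbf{0}$, matching the hypothesis of the theorem.

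Next, every element of $\Pi(H(d,2))$ is by definition the equivalence class $[W]$ of some closed walk $W$ starting and ending at $\mathbf{0}$. Theorem \ref{L:H2nTrivial} asserts precisely that any such $W$ satisfies $W \simeq (\mathbf{0})$, so $[W] = [(\mathbf{0})]$ for every representative. Since the constant walk $(\mathbf{0})$ is the identity element $e$ of the group under concatenation, this yields $[W] = e$ for every class, and hence the group has a single element, so $\Pi(H(d,2)) \cong \{e\}$.

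The only point I would verify carefully is that this single equivalence class is genuinely the group identity, rather than merely a lone element of an a priori larger set; but this is immediate, because concatenating $(\mathbf{0})$ with any closed walk leaves it unchanged, so $[(\mathbf{0})]$ acts as the identity for the operation. I do not anticipate any genuine obstacle here: the entire content lives in Theorem \ref{L:H2nTrivial}, and the corollary follows in one line by unwinding Definition \ref{D:FundGroup}. If anything, the main thing to state explicitly is the connectivity of $H(d,2)$, which legitimizes invoking the fundamental group in the first place.
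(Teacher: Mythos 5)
Your proposal is correct and follows exactly the paper's route: the corollary is read off directly from Theorem \ref{L:H2nTrivial}, which shows every based closed walk is equivalent to the trivial walk $(\mathbf{0})$, so the fundamental group consists of the identity class alone. The extra remarks you add (connectivity of $H(d,2)$ and verifying that $[(\mathbf{0})]$ is the group identity) are harmless elaborations of the same one-line argument the paper gives.
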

\begin{proof}
    Since each closed walk of $H(d,2)$ is trivial, its fundamental group has no nontrivial generators. 
\end{proof}

\section{General Hamming graphs $H(d,q)$, where $q\neq 2$.}\label{H(d,q)}

We begin our computation of $\Pi(H(d,q)), q>2$. We begin by defining the ground walk. The overall strategy will be to show that these ground walks form a minimal generating set of $\Pi(H(d,q))$. By also showing that they commute and after computing their order, we will be able to fully compute the fundamental group.

\[\begin{tikzpicture}[scale=1.4]

\foreach \x in {0,...,2}{
\foreach \y in {0,...,2}{

\draw (\x,\y) --node[above right]{\tiny $(\x, \y)$} (\x, \y);

}
};

\foreach \x in {0,...,2}{
\draw (\x, 0)--(\x, 2);
\draw (\x,0)edge[bend left](\x,2);
};

\draw (0,0)--(2,0);
\draw (0,2)--(2,2);
\draw (0,1)--(2,1);

\foreach \y in {0,...,2}{
\draw (0, \y)--(2, \y);
\draw (0,\y)edge[bend right](2,\y);
};

\draw[fill, violet] (0,0) circle (2pt);
\draw[fill, red] (1,0) circle (2pt);
\draw[fill, teal] (2,0) circle (2pt);
\draw[fill, violet] (0,1) circle (2pt);
\draw[fill, red] (1,1) circle (2pt);
\draw[fill, teal] (2,1) circle (2pt);
\draw[fill, violet] (0,2) circle (2pt);
\draw[fill, red] (1,2) circle (2pt);
\draw[fill, teal] (2,2) circle (2pt);

\draw[ thick, violet] (0,0) circle (2.2pt);
\draw[thick,  red] (0,1) circle (2.2pt);
\draw[ thick,  teal] (0,2) circle (2.2pt);
\draw[thick,  violet] (1,0) circle (2.2pt);
\draw[thick,  red] (1,1) circle (2.2pt);
\draw[thick,  teal] (1,2) circle (2.2pt);
\draw[thick,  violet] (2,0) circle (2.2pt);
\draw[thick,  red] (2,1) circle (2.2pt);
\draw[thick,  teal] (2,2) circle (2.2pt);
 \end{tikzpicture}\]

\begin{definition}\label{D:GroundWalk}
Let $U_i$ be a walk in $H(d,q)$ of length three where the only nonzero coordinate is $i$. Thus, $U_i=(\mathbf{0}(0,-,a_i,-,0)(0,-,b_i,-, 0)\mathbf{0})$. We call this a \textbf{ground walk} in the $i$th coordinate. 
\end{definition}

We will show that the ground walks defined above form a minimal generating set for the fundamental group. We begin by demonstrating that the ground walks commute as an important property of the group. We note the possibilities for the vertices of our ground walks: 
\begin{remark}\label{R:Uniqueness}
When $q=3$, the only options for $a_i$ and $b_i$ are 1 and 2, so there are only 2 options for $U_i$: $U_i=(\mathbf{0}(0,-, 1_i, -, 0)(0,-, 2_i, -, 0)\mathbf{0})$ or $U_i^{-1}=(\mathbf{0}(0,-, 2_i, -, 0)(0,-, 1_i, -, 0)\mathbf{0})$. We pick one, and one is the inverse of the other. 
If $q\neq 3$, there are multiple options for $a_i$ and $b_i$, so there are multiple possibilities for $U_i$ but note that any closed three-walks chosen are equivalent since these walks are contained in a subgraph isomorphic to $K_q$, and all closed three-walks are equivalent in $K_q$ from Proposition \ref{P:Kn} \cite{CS3}.
%\cite{BM}, \cite{CS3}
\end{remark}

\begin{example}\label{H(2,3)}

\[\begin{tikzpicture}[scale=1.4]

\foreach \x in {0,...,2}{
\foreach \y in {0,...,2}{

\draw (\x,\y) --node[above right]{\tiny $(\x, \y)$} (\x, \y);

}
};
\draw[ultra thick, blue] (0,0)--(1,0)--(2,0) edge[bend left] (0,0);
\draw[ultra thick, green] (0,0)--(0,1)--(0,2) edge[bend right] (0,0);
%how do i make the colored line follow the bend

\foreach \x in {0,...,2}{
\draw (\x, 0)--(\x, 2);
\draw (\x,0)edge[bend left](\x,2);
};

\draw (0,0)--(2,0);
\draw (0,2)--(2,2);
\draw (0,1)--(2,1);
\foreach \y in {0,...,2}{
\draw (0, \y)--(2, \y);
\draw (0,\y)edge[bend right](2,\y);
};

\draw[fill, violet] (0,0) circle (2pt);
\draw[fill, red] (1,0) circle (2pt);
\draw[fill, teal] (2,0) circle (2pt);
\draw[fill, violet] (0,1) circle (2pt);
\draw[fill, red] (1,1) circle (2pt);
\draw[fill, teal] (2,1) circle (2pt);
\draw[fill, violet] (0,2) circle (2pt);
\draw[fill, red] (1,2) circle (2pt);
\draw[fill, teal] (2,2) circle (2pt);

\draw[ thick, violet] (0,0) circle (2.2pt);
\draw[thick,  red] (0,1) circle (2.2pt);
\draw[ thick,  teal] (0,2) circle (2.2pt);
\draw[thick,  violet] (1,0) circle (2.2pt);
\draw[thick,  red] (1,1) circle (2.2pt);
\draw[thick,  teal] (1,2) circle (2.2pt);
\draw[thick,  violet] (2,0) circle (2.2pt);
\draw[thick,  red] (2,1) circle (2.2pt);
\draw[thick,  teal] (2,2) circle (2.2pt);
 \end{tikzpicture}\]
 \end{example}
Observe the following example of $H(2,3)$. Recall that the ground walks $(0,0),(0,1),(0,2),(0,0)$ and $(0,0),(1,0),(2,0),(0,0)$ are walks of length 3. Then, any equivalent walk will be of odd length. Since pruning preserves the parity of length, such a walk can never be equivalent to a walk of length 0. Thus, the ground walks are not trivial.

\begin{lemma}\label{L:Commutativity}
Let $U_i$ and $U_j$ be ground walks in $H(d,q)$ such that $U_i=(\mathbf{0} (0,-, a_i, -, 0)(0,-, b_i, -, 0)\mathbf{0})$ and $U_j=(\mathbf{0}(0,-, a_j, -, 0)(0,-, b_j, -, 0)\mathbf{0})$, $i\neq j$. Then $U_i*U_j\simeq U_j*U_i$, i.e. $U_i, U_j$ commute.
\end{lemma}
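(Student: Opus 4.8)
The plan is to reduce everything to two dimensions and then prove commutativity by \emph{transporting} one ground walk around the other. Every vertex appearing in $U_i$, in $U_j$, or in any of the intermediate walks I will use has all coordinates equal to $0$ except possibly in positions $i$ and $j$, and the subgraph of $H(d,q)$ induced by such vertices is isomorphic to $H(2,q)$. So I may work entirely inside this copy of $H(2,q)$ and record a vertex by its pair $(x,y)$ of entries in coordinates $i$ and $j$ (all other entries being $0$). In these terms $U_i$ is the triangular ground walk $(0,0),(a_i,0),(b_i,0),(0,0)$ lying in the row $y=0$, and $U_j$ is the triangle $(0,0),(0,a_j),(0,b_j),(0,0)$ lying in the column $x=0$. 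Since equivalence is generated by spider moves, prunes, and anti-prunes (the Remark following the definition of homotopy rel endpoints), concatenation is well defined on equivalence classes; hence it suffices to prove $U_i*U_j*U_i^{-1}\simeq U_j$, because concatenating on the right with $U_i$ and pruning the resulting $U_i^{-1}*U_i$ back to the constant walk then yields $U_i*U_j\simeq U_j*U_i$.

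The single computational ingredient is a \textbf{transport gadget}. For distinct coordinate-$i$ values $s$ and $t$, write $V^{(t)}=\big((t,0),(t,a_j),(t,b_j),(t,0)\big)$ for the vertical ground loop based in column $t$ (so $V^{(0)}=U_j$), and let $g$ be the length-one walk from $(s,0)$ to $(t,0)$. I claim $g*V^{(t)}*g^{-1}\simeq V^{(s)}$. This is proved by a short fixed sequence of moves inside the two columns $s,t$: starting from $(s,0),(t,0),(t,a_j),(t,b_j),(t,0),(s,0)$, one spider-moves the two copies of $(t,0)$ adjacent to $g$ onto $(s,a_j)$ and $(s,b_j)$, then slides the remaining interior corner $(t,a_j)$ across the square it spans onto $(s,b_j)$; the walk now reads $(s,0),(s,a_j),(s,b_j),(t,b_j),(s,b_j),(s,0)$, whose backtrack $(s,b_j),(t,b_j),(s,b_j)$ prunes away to leave exactly $V^{(s)}$. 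Each replacement vertex is adjacent to both of its walk-neighbours precisely because the values $0,s,t,a_j,b_j$ are distinct enough, which is where $q\geq 3$ is used.

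With the gadget in hand I transport $U_j$ around the three edges of the triangle $U_i$, across the columns $0\to a_i\to b_i\to 0$. Writing $g_1,g_2,g_3$ for those three horizontal edge-walks (so $U_i=g_1*g_2*g_3$), three applications of the gadget give $V^{(a_i)}\simeq g_1^{-1}*U_j*g_1$, then $V^{(b_i)}\simeq g_2^{-1}*V^{(a_i)}*g_2$, and finally $V^{(b_i)}\simeq g_3*U_j*g_3^{-1}$, the last because transporting from column $b_i$ back to column $0$ returns the vertical loop $V^{(0)}=U_j$. Equating the two expressions for $V^{(b_i)}$ and cancelling pairs $g_k*g_k^{-1}$ by prunes rearranges to $U_i*U_j*U_i^{-1}\simeq U_j$, which is the asserted commutation. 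I expect the transport gadget to be the only real obstacle: one must pin down the explicit spider-move sequence and verify that every intermediate vertex is a genuine vertex of $H(2,q)$ adjacent to both of its neighbours in the walk. Everything else is formal bookkeeping with equivalence classes; as an alternative one could slide the column-$0$ triangle directly past the row-$0$ triangle using the same squares, but the transport formulation keeps the argument cleanest.
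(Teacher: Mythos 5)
Your proof is correct, but it takes a genuinely different route from the paper's. The paper proves the lemma by one direct chain of spider moves applied to the concatenation $U_i*U_j$: working in the grid of vertices supported on coordinates $i,j$, it first pushes the middle basepoint $\mathbf{0}$ out to the corner $(0,-,b_i,-,a_j,-,0)$ and then rotates the six interior vertices of the resulting hexagon around the grid until the walk reads $U_j*U_i$ --- about seven lines of explicit spider moves with no auxiliary lemma. You instead reduce to the conjugation identity $U_i*U_j*U_i^{-1}\simeq U_j$ and prove a reusable transport gadget ($g*V^{(t)}*g^{-1}\simeq V^{(s)}$ for a single horizontal edge $g$), which you then iterate around the three edges of $U_i$. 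I checked each spider move in your gadget: every replacement vertex is adjacent to both of its walk-neighbours, using only that $0,a_j,b_j$ are pairwise distinct and $s\neq t$, and the subsequent cancellation of the $g_k*g_k^{-1}$ pairs is legitimate given that concatenation descends to equivalence classes. The trade-off is that the paper's computation is shorter and self-contained, whereas your formulation explains \emph{why} the generators commute (the loop $U_j$ is invariant under transport along any edge that changes a different coordinate, hence under transport around all of $U_i$) and gives a tool that transports $U_j$ along arbitrary walks, not just around this particular triangle. One small quibble: your remark that the gadget ``is where $q\geq 3$ is used'' is slightly off --- $q\geq 3$ is needed only for the ground walks to exist in the first place; the gadget itself requires nothing beyond the distinctness of $0,a_j,b_j$ and of $s,t$.
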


\begin{proof}
Let $W=U_i*U_j$, then:
\begin{eqnarray*}
W&=&\mathbf{0}(0,-, a_i, -, 0)(0,-, b_i, -,0)\mathbf{0}(0,-, a_j, -, 0)(0,-, b_j, - )\mathbf{0}\\
&\simeq&\mathbf{0}(0,-, a_i, -, 0)(0,-, b_i, -, 0)(0,-,b_i,-, a_j, -, 0), (0,-, a_j, -, 0)(0,-, b_j,-,0)\mathbf{0}\\
&\simeq&\mathbf{0}(0,-, a_i, -, 0)(0,-, a_i, -, a_j, -,0 )(0,-,b_i,-, a_j, -, 0)(0,-, b_i, -, b_j, -, 0)(0,-, b_j, -,0 )\mathbf{0}\\
&\simeq&\mathbf{0}(0,-, a_j, -, 0)(0,-, a_i,-, a_j, -,0 )(0,-,b_i,-, a_j, -, 0)(0,-, b_i, -, b_j, -, 0)(0,-, b_i, -,0 )\mathbf{0}\\
&\simeq&\mathbf{0}(0,-, a_j, -, 0)(0,-, a_i, -, a_j, -,0 )(0,-,a_i,-, b_j, -, 0)(0,-, b_i, -, b_j, -, 0)(0,-, b_i, -,0 )\mathbf{0}\\
&\simeq&\mathbf{0}(0,-, a_j, -, 0)(0,-, b_j, -, 0)(0,-,a_i,-, b_j, -, 0)(0,-, a_i, -, 0)(0,-, b_i, -, 0)\mathbf{0}\\
&\simeq&\mathbf{0}(0,-, a_j, -, 0)(0,-, b_j, -, 0)\mathbf{0}(0,-, a_i, -, 0)(0,-, b_i, -, 0)\mathbf{0}\\
&=&U_j*U_i.
\end{eqnarray*}
Through spider moves, we are able to reorder the vertices so we move through the ground walks in the reverse order, therefore showing that $U_i$ and $U_j$ commute.
\end{proof}

We next present a technical lemma showing what spider moves between walks are possible, which will later be used to show that as a generating set, the ground walks are minimal.

\begin{lemma}\label{L:AllowedSpider}
Let $W, W'$ be walks which differ by a spider move on the $k$th vertex of each walk, $v_k, v'_k$ respectively. If $v_{k-1}, v_k$ and $v_k, v_{k+1}$ differ in the same coordinate, then $v'_k$ is different from both vertices in the same coordinate as well.  However if $v_{k-1}, v_k$ differ in coordinate $i$ and $v_k, v_{k+1}$ differ in coordinate $j$, then $v_{k-1}, v'_k$ differ in coordinate $j$ and $v'_k, v_{k+1}$ differ in coordinate $i$, and there is only one choice for $v'_k$.
\end{lemma}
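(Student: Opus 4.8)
The plan is to argue directly from coordinates, using only the defining adjacency of $H(d,q)$ (two vertices are adjacent exactly when they agree in all but one coordinate) together with the two features of a spider move that matter here: $v'_k$ must be adjacent to both $v_{k-1}$ and $v_{k+1}$ so that $W'$ is still a walk, and $v'_k \neq v_k$.

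First I would fix notation by writing $v_{k-1} = (x_1, \ldots, x_d)$. In either case $v_k$ agrees with $v_{k-1}$ off coordinate $i$, so its $i$th entry is some $x'_i \neq x_i$, and $v_{k+1}$ is obtained from $v_k$ by changing a single coordinate. In the same-coordinate case $v_{k+1}$ also agrees with $v_k$ off coordinate $i$, so all three of $v_{k-1}, v_k, v_{k+1}$ lie in the copy of $K_q$ in which only coordinate $i$ varies; assuming $v_{k-1} \neq v_{k+1}$ (otherwise the triple is prunable rather than a genuine spider configuration), the two endpoints differ only in coordinate $i$. Then if $v'_k$ differed from $v_{k-1}$ in some coordinate $\ell \neq i$, since $v_{k+1}$ agrees with $v_{k-1}$ off coordinate $i$ the vertex $v'_k$ would differ from $v_{k+1}$ in both $\ell$ and $i$, contradicting adjacency. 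Hence $v'_k$ differs from $v_{k-1}$, and symmetrically from $v_{k+1}$, only in coordinate $i$, which is the claim.

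The substantive case is $i \neq j$. Here $v_{k+1}$ differs from $v_{k-1}$ in exactly the two coordinates $i$ and $j$: entry $x'_i \neq x_i$ in coordinate $i$, entry $x'_j \neq x_j$ in coordinate $j$, and agreement elsewhere. Writing $\ell$ for the unique coordinate in which $v'_k$ differs from $v_{k-1}$ and $m$ for the one in which it differs from $v_{k+1}$, I would track the $i$th and $j$th entries of $v'_k$. Since these entries disagree between $v_{k-1}$ and $v_{k+1}$, the $i$th entry of $v'_k$ cannot match both, forcing $i \in \{\ell, m\}$, and likewise $j \in \{\ell, m\}$; as $\ell, m$ are single coordinates and $i \neq j$, this gives $\{\ell, m\} = \{i, j\}$. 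The key step is then to eliminate the aligned assignment $\ell = i$, $m = j$: in that case $v'_k$ agrees with $v_{k-1}$ off coordinate $i$ and with $v_{k+1}$ off coordinate $j$, which pins its $i$th entry to $x'_i$ and every other entry to that of $v_{k-1}$, i.e. $v'_k = v_k$, contradicting $v'_k \neq v_k$. Thus $\ell = j$ and $m = i$, which is the asserted crossing. Solving the remaining constraints (agree with $v_{k-1}$ off coordinate $j$, agree with $v_{k+1}$ off coordinate $i$) then determines every entry of $v'_k$: it is $v_{k-1}$ with its $j$th entry replaced by $x'_j$, yielding the uniqueness clause.

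I expect the main obstacle to be the bookkeeping in this last case, and in particular the clean argument that the aligned assignment collapses to $v_k$; everything else is a direct translation of ``adjacent'' into ``differs in exactly one coordinate.'' One minor point I would settle explicitly is the degenerate sub-case $v_{k-1} = v_{k+1}$ in the same-coordinate case, where $v'_k$ need only be adjacent to a single vertex and can differ from it in any coordinate; I would note that this is exactly the prunable situation and is therefore excluded from the spider-move hypothesis.
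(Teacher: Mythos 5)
Your proposal is correct and follows essentially the same coordinate-tracking argument as the paper: in the same-coordinate case adjacency to both neighbours confines $v'_k$ to the $K_q$ in which only coordinate $i$ varies, and in the mixed case the constraint $v'_k \neq v_k$ eliminates the aligned assignment and forces the crossed one, pinning $v'_k$ uniquely. Your explicit exclusion of the prunable sub-case $v_{k-1}=v_{k+1}$ is a point of care the paper's proof leaves implicit, but it does not change the substance of the argument.
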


\begin{proof}
    Suppose in the first case $v_{k-1} = (a_1, - a_i, -, a_n ), v_k = (a_1, - b_i, -, a_n )$, $v_{k+1} = (a_1, - c_i, -, a_n )$. Then since $v'_k\sim v_{k-1}, v_{k+1}$ it must be of the form $(a_1, - d_i, -, a_n )$.  Otherwise, either $v_{k-1}, v'_k$ or $v'_k, v_{k+1}$ would differ in more than 2 coordinates.
    
    Suppose in the second case: $v_{k-1} = (a_1, - , a_i, - a_j, -, a_n ), v_k = (a_1, - ,b_i, -, a_j, -, a_n )$, $v_{k+1} = (a_1, - b_i, -, b_j, -, a_n )$.  Then since $v'_k\sim v_{k-1}, v_{k+1}$ and $v'_k\neq v_k$, the only possible choice for $v'_k$ is $v'_k=(a_1, - a_i, -, b_j, -, a_n)$
\end{proof}

\begin{example}

    Case 1: Three consecutive vertices differ in the same coordinate

\[\begin{tikzpicture}[scale=1.4]

\foreach \x in {0,...,3}{
    \foreach \y in {0,...,3}{

    \draw (0,1) --node[above right]{\small $a$} (0, 1);
    \draw (1,1) --node[above right]{\small $b$} (1, 1);
    \draw (2,1) --node[above right]{\small $c$} (2, 1);
    \draw (3,1) --node[above right]{\small $d$} (3, 1);

    };
};

\foreach \x in {0,...,3}{
\draw (\x, 0)--(\x, 3);
\draw (\x,0)edge[bend left](\x,2);
\draw (\x,1)edge[bend left](\x,3);
\draw (\x,0)edge[bend left](\x,3);

};

\draw (0,0)--(3,0);
\draw (0,2)--(3,2);
\draw (0,1)--(3,1);
\draw (0,3)--(3,3);

\draw[ultra thick, blue] (0,1)--(1,1)--(2,1);
\draw[ultra thick, green] (0,1)--(1,1) edge[bend right] (3,1);

\foreach \y in {0,...,3}{
\draw (0, \y)--(3, \y);
\draw (0,\y)edge[bend right](2,\y);
\draw (1,\y)edge[bend right](3,\y);
\draw (0,\y)edge[bend right](3,\y);

};

\draw[fill, black] (0,0) circle (2pt);
\draw[fill, black] (1,0) circle (2pt);
\draw[fill, black] (2,0) circle (2pt);
\draw[fill, red] (0,1) circle (2pt);
\draw[fill, green] (1,1) circle (2pt);
\draw[fill, blue] (2,1) circle (2pt);
\draw[fill, black] (0,2) circle (2pt);
\draw[fill, black] (1,2) circle (2pt);
\draw[fill, black] (2,2) circle (2pt);
\draw[fill, black] (3,0) circle (2pt);
\draw[fill, orange] (3,1) circle (2pt);
\draw[fill, black] (3,2) circle (2pt);
\draw[fill, black] (0,3) circle (2pt);
\draw[fill, black] (1,3) circle (2pt);
\draw[fill, black] (2,3) circle (2pt);
\draw[fill, black] (3,3) circle (2pt);

 \end{tikzpicture}\]

 Observe the following example of $H(2,4)$, where we have walk $W=(a,b,c)$. Note that $a,b,c$ are three consecutive vertices that differ from one another in the same coordinate. Observe then that to create walk $W'\simeq W$ where $W$ and $W'$ differ by a spider move, $W'=(a,b,d)$ is the only option, where we have a spider move changing vertex $c$ to $d$. 
    
    Case 2: Three consecutive vertices differ in multiple coordinates
     
\[\begin{tikzpicture}[scale=1.4]

\foreach \x in {0,...,2}{
\foreach \y in {0,...,2}{

\draw (1,2) --node[above right]{\small $a$} (1, 2);
\draw (2,2) --node[above right]{\small $b$} (2, 2);
\draw (1,1) --node[above right]{\small $c$} (1, 1);
\draw (2,1) --node[above right]{\small $d$} (2, 1);

}
};

\draw[ultra thick, blue] (1,1)--(2,1)--(2,2);
\draw[ultra thick, green] (1,1)--(1,2)--(2,2);

\foreach \x in {0,...,2}{
\draw (\x, 0)--(\x, 2);
\draw (\x,0)edge[bend left](\x,2);
};

\draw (0,0)--(2,0);
\draw (0,2)--(2,2);
\draw (0,1)--(2,1);
\foreach \y in {0,...,2}{
\draw (0, \y)--(2, \y);
\draw (0,\y)edge[bend right](2,\y);
};

\draw[fill, black] (0,0) circle (2pt);
\draw[fill, black] (1,0) circle (2pt);
\draw[fill, black] (2,0) circle (2pt);
\draw[fill, black] (0,1) circle (2pt);
\draw[fill, blue] (1,1) circle (2pt);
\draw[fill, green] (2,1) circle (2pt);
\draw[fill, black] (0,2) circle (2pt);
\draw[fill, red] (1,2) circle (2pt);
\draw[fill, orange] (2,2) circle (2pt);
 \end{tikzpicture}\]
 
Observe in the following example of $H(2,3)$ the path $W=(c,a,b)$. We see that $a\sim c,b$, so the only possible spider move for $a$ is $d$, since $d\sim c,b$ as well. So $W'=(c,d,b)$, which differs from $W$ by a spider move. 
\end{example}

 We now show that the ground walks are minimal as a generating set.

\begin{lemma}\label{L:GroundWalksDifferent}
For a walk $U_i$, $U_i$ can not be a product of all the $U_j$ where $i\neq j$.
%$U_i\not \simeq U_j$. 
\end{lemma}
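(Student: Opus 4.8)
The plan is to construct a group homomorphism $\rho_i \colon \Pi(H(d,q)) \to \Pi(K_q)$ that records only the $i$th-coordinate content of a closed walk, sends $U_i$ to a nontrivial element, and kills every $U_j$ with $j \neq i$. Concretely, given a closed walk $W = (w_0 w_1 \cdots w_n)$ based at $\mathbf{0}$, I would extract the sequence of $i$th coordinates $(w_0^{(i)}, \ldots, w_n^{(i)})$ and then delete each entry equal to its predecessor. Since consecutive vertices of $W$ differ in exactly one coordinate, consecutive $i$th coordinates are either equal (and deleted) or distinct, so the reduced sequence is a genuine closed walk in $K_q$ based at $0$; call it $\rho_i(W)$.

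The crux is to show that $\rho_i$ is constant on $\times$-homotopy classes, so that it descends to $\Pi(H(d,q))$; that it respects concatenation is immediate, since all walks begin and end with $i$th coordinate $0$, so the junction repeat collapses. For a prune at $w_k = w_{k+2}$ I would note that $w_k^{(i)} = w_{k+2}^{(i)}$, so in the reduced $K_q$-sequence the deleted portion is either absorbed into a run of equal entries or forms a back-and-forth $w_k^{(i)}, w_{k+1}^{(i)}, w_k^{(i)}$, i.e. a prune in $K_q$; anti-prunes are the reverse. For a spider move replacing $w_k$ by $w_k'$ I would invoke Lemma \ref{L:AllowedSpider}. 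When the two incident edges of $w_k$ lie in the same coordinate $c$: if $c = i$ the move becomes a spider move of the middle vertex in $K_q$, and if $c \neq i$ the three $i$th coordinates are already equal, so nothing changes. When the incident edges lie in distinct coordinates $a \neq b$: if neither equals $i$ the $i$th coordinate is constant across $w_{k-1}, w_k, w_{k+1}, w_k'$ and $\rho_i$ is unchanged, while if (say) $a = i$ then Lemma \ref{L:AllowedSpider} pins down $w_k'$ so that the single $i$th-coordinate step $w_{k-1}^{(i)} \to w_k^{(i)}$ merely migrates to the adjacent position, leaving the reduced $K_q$-walk identical. Thus $\rho_i$ is a well-defined homomorphism on $\Pi(H(d,q))$.

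It then remains to evaluate $\rho_i$ on generators. For $j \neq i$ the walk $U_j$ has all $i$th coordinates equal to $0$, so $\rho_i(U_j)$ is the constant walk, hence the identity of $\Pi(K_q)$. On the other hand $\rho_i(U_i)$ is the length-three walk $0 \to a_i \to b_i \to 0$ in $K_q$, which is nontrivial: spider moves preserve length and prunes change it by two, so length parity is a homotopy invariant, and a walk of odd length cannot equal the trivial walk (this also follows from Proposition \ref{P:Kn}). Consequently, for any word $P$ in the $U_j$ with $j \neq i$ we have $\rho_i(P) = e$, whereas $\rho_i(U_i) \neq e$; since $\rho_i$ is well defined on homotopy classes this forces $U_i \not\simeq P$, establishing that $U_i$ is not a product of the $U_j$ for $j \neq i$.

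I expect the main obstacle to be the spider-move verification in the mixed-coordinate case where exactly one incident edge lies in coordinate $i$, since there one must track precisely how the single nonconstant $i$th-coordinate step shifts position without altering the reduced walk; Lemma \ref{L:AllowedSpider} is exactly the tool that determines $w_k'$ uniquely and makes this bookkeeping go through.
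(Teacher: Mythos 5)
Your argument is correct, and it is a strengthened version of the route the paper takes rather than an identical one. The paper's proof extracts a single scalar invariant from a closed walk --- the parity of the number of edges along which coordinate $i$ changes --- and checks that prunes and anti-prunes shift this count by $2$ while spider moves, by Lemma \ref{L:AllowedSpider}, preserve it exactly; since $U_i$ has three changes in coordinate $i$ and every word in the $U_j$, $j\neq i$, has zero, the parities disagree. Your $\rho_i$ retains strictly more information: the projected, repeat-collapsed walk is an element of $\Pi(K_q)$, and the paper's invariant is precisely the composite of your $\rho_i$ with the length-mod-$2$ map on walks in $K_q$. The verification burden is essentially the same in both cases --- the same three move types, with Lemma \ref{L:AllowedSpider} doing the work in the spider case, and your mixed-coordinate bookkeeping (the nonconstant $i$th-coordinate step migrating one position) is exactly where the paper observes that the change count is unaltered. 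What the paper's version buys is brevity; what yours buys is a genuine homomorphism $\Pi(H(d,q))\to\Pi(K_q)$ for each $i$, which assembled over all $i$ would also separate the generators in Theorem \ref{T:FundamentalGroups}, give the $q=3$ infinite-order claim of Lemma \ref{L:Order} without constructing the explicit cover, and anticipates the $\Pi(G\ \square\ H)\cong\Pi(G)\times\Pi(H)$ decomposition mentioned in the conclusion. One small point worth making explicit if you write this up: in the spider case where both incident edges change coordinate $i$, you should note that $w_k'$ agrees with $w_k$ off coordinate $i$ and satisfies $w_k'^{(i)}\neq w_{k-1}^{(i)},\,w_{k+1}^{(i)}$, so the induced move on the reduced walk really is a legal spider move in $K_q$; this follows immediately from Lemma \ref{L:AllowedSpider} but is the one place the reduced walk actually changes.
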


\begin{proof}
    Note that $U_i = \mathbf{0}, (0,-, a_i, -, 0)(0,-, b_i, - 0),\mathbf{0}$ and each $U_j$ is defined similarly.  Note that $U_i$ is a walk where the $i$th coordinate is changed 3 times, and each other coordinate changes 0 times.  Note that for any walk $W$ and $W'$, a prune or anti-prune of $W$, $W'$ has either 2 less or 2 more changes in some coordinate $k$ than $W$.  
    
    Suppose $W'$ and $W$ differ by a spider move.  Then by Lemma \ref{L:AllowedSpider}, the number of changes in each coordinate between $W, W'$ remain exactly the same.
    
    Then for any $\hat{W}\simeq W$, since $\hat{W}$ is achieved by a finite sequence of prunes, anti-prunes or spider moves, the number of changes in each coordinate has the same parity as $W$.
    
    So if $\hat{W}\simeq U_i$ then $\hat{W}$ has an odd number of changes in coordinate $i$ and an even number otherwise.  But since all the $U_j$ have 0 changes in coordinate $i$, we have proven that $U_i$ is not generated by the set $\{U_j|j \neq i\}$.
\end{proof}

We  will now show that the subgroup they generate is in fact the entire fundamental group.  The following proposition shows that we may ``retract" a 3-cycle mid walk back to a ground walk.  

\begin{proposition}\label{L:ThreeWalkRetract}
Let $W$ be a walk of length n in $H(d,q)$ starting and ending at the origin such that $$W = K*((a_1,-,a_i,-,a_d)(-,b_i,-)(-,c_i,-)(-,a_i,-))*K^{-1}.$$ Let $C=((-,a_i,-)(-,b_i,-)(-,c_i,-)(-,a_i,-))$, and thus $W=K*C*K^{-1}$. Then $C$ is equivalent to a ground walk $U_i$.
\end{proposition}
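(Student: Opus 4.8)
The plan is to prove the evidently intended statement, namely that the based loop $W = K*C*K^{-1}$ at the origin is homotopic rel endpoints to a ground walk $U_i$; here $C = ((-,a_i,-)(-,b_i,-)(-,c_i,-)(-,a_i,-))$ is a closed three-walk changing only the $i$th coordinate, and $K$ runs from $\mathbf{0}$ to $p = (a_1,-,a_i,-,a_d)$. I would argue by induction on the length $m = \ell(K)$, each step ``sliding'' the small coordinate-$i$ loop one edge closer to the origin along $K$.

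For the base case $m = 0$ we have $p = \mathbf{0}$, so $C$ is a closed three-walk based at the origin lying entirely in the complete subgraph $\cong K_q$ obtained by fixing every coordinate except $i$. By Remark \ref{R:Uniqueness} every closed three-walk in this $K_q$ is equivalent to a ground walk in the $i$th coordinate (when $q = 3$ there are exactly two such walks, inverse to one another, and $C$ matches one of them), so $C \simeq U_i$ for some ground walk $U_i$. For the inductive step I would write $K = (w_0 = \mathbf{0}, w_1, \dots, w_m = p)$ and focus on the ``turnaround'' subwalk $(w_{m-1}, p, q_1, q_2, p, w_{m-1})$ appearing inside $W$, where $C = (p, q_1, q_2, p)$ and $q_1, q_2$ lie on the coordinate-$i$ line through $p$. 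Let $j$ be the coordinate changed by the last edge $w_{m-1} \sim p$. If $j = i$, then $w_{m-1}$ lies on that same line, so the whole turnaround is a closed walk based at $w_{m-1}$ inside one copy of $K_q$, and by the $K_q$ computation cited in Remark \ref{R:Uniqueness} it is equivalent rel $w_{m-1}$ to a closed three-walk $(w_{m-1}, r_1, r_2, w_{m-1})$ on that line. If $j \neq i$, I would push the coordinate-$j$ excursion through the loop by three successive spider moves: at the first $p$ the flanking vertices $w_{m-1}$ and $q_1$ differ from $p$ in the distinct coordinates $j$ and $i$, so Lemma \ref{L:AllowedSpider} forces a unique replacement $p'$ lying on the coordinate-$i$ line through $w_{m-1}$; the same analysis applies in turn at $q_1$ and then at $q_2$, each time because the two flanking edges change distinct coordinates, transporting the loop onto the coordinate-$i$ line through $w_{m-1}$ and leaving a backtrack $(w_{m-1}, p, w_{m-1})$, which I then prune.

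In either case the turnaround has been rewritten as a closed three-walk on the coordinate-$i$ line through $w_{m-1}$, so $W \simeq K'' * C'' * (K'')^{-1}$ with $\ell(K'') = m-1$, and the inductive hypothesis finishes the argument.

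I expect the main obstacle to be the coordinate bookkeeping in the case $j \neq i$: one must verify that after each of the three spider moves the two edges flanking the next vertex still differ in distinct coordinates, so that Lemma \ref{L:AllowedSpider} (case 2) keeps applying and the replacement vertex remains forced, and that neither the spider moves nor the final prune disturbs the initial segment $(w_0, \dots, w_{m-1})$ of $K$. This is essentially the spider-move computation of Lemma \ref{L:Commutativity} reorganized as a one-edge reduction, and tracking which closed three-walk (and, when $q = 3$, which orientation) survives each step is the only delicate part. The sole non-elementary input is the fact recorded in Remark \ref{R:Uniqueness}, that closed three-walks in $K_q$ based at a common vertex agree up to orientation, which supplies both the base case and the $j = i$ subcase.
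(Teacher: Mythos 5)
Your proposal is correct and follows essentially the same strategy as the paper: induct so as to slide the closed three-walk one edge down $K$ toward the origin, splitting into the case where the last edge of $K$ changes coordinate $i$ (handled by a spider move and a prune inside the relevant $K_q$) and the case where it changes a different coordinate $j$ (handled by pushing the loop through with forced spider moves as in Lemma \ref{L:AllowedSpider} and pruning the resulting backtrack). The only cosmetic difference is that you index the induction by $\ell(K)$ rather than by the length of $W$, and you invoke Remark \ref{R:Uniqueness} explicitly in the base case where the paper simply observes the length-three loop at the origin is a ground walk.
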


\begin{proof}
We proceed via induction on the length of $W$. \\
For a walk $W$ where $n=3$, $W=(\mathbf{0}(0,- a_i-,0)(0-,b_i,-,0)(0-,0))$. Since the walk must start and end at the origin, the only option for $W$ is $U_i$.\\

There are two cases for the vertex leading up to $C$ within $W$ for walks longer than $n=3$. The first case is that the preceding vertex also has a change in the same coordinate as $C$. In this case:

\begin{eqnarray*}
W&=&(\mathbf{0}\cdots (-,x_i,-)(-,a_i,-)(-,b_i,-)(-,c_i,-)(-,a_i,-)(-,x_i,-)\cdots\mathbf{0})\\
&\simeq&(\mathbf{0}\cdots (-,x_i,-)(-,a_i,-)(-,x_i,-)(-,c_i,-)(-,a_i,-)(-,x_i,-)\cdots\mathbf{0})\\
&\simeq&(\mathbf{0}\cdots (-,x_i,-)(-,c_i,-)(-,a_i,-)(-,x_i,-)\cdots\mathbf{0})\\
&\simeq&(\mathbf{0}\cdots x_i)*(-,x_i,-)(-,c_i,-)(-,a_i,-)(-,x_i,-)*(x_i\cdots\mathbf{0}).\\
\end{eqnarray*}

The second case is that the vertex before does not have a change in the same coordinate. The inductive hypothesis is the same, but in this case we have:

\begin{eqnarray*}
W&=&(\mathbf{0}\cdots(-,a_i,-,a_j,-)(-,a_i,-,b_j,-)(-,b_i,-,b_j,-)(-,c_i,-,b_j,-)(-,a_i,-,a_j-)(-,a_i,-,x_j,-) \cdots\mathbf{0})\\
&\simeq&(\mathbf{0}\cdots(-,a_i,-,a_j,-)(-,b_i,-,a_j,-)(-,b_i,-,b_j,-)(-,c_i,-,b_j,-)(-,a_i,-,b_j-)(-,a_i,-,a_j,-) \cdots\mathbf{0})\\
&\simeq&(\mathbf{0}\cdots(-,a_i,-,a_j,-)(-,b_i,-,a_j,-)(-,c_i,-,a_j,-)(-,c_i,-,b_j,-)(-,a_i,-,b_j,-)(-,a_i,-,a_j,-) \cdots\mathbf{0})\\
&\simeq&(\mathbf{0}\cdots(-,a_i,-,a_j,-)(-,b_i,-,a_j,-)(-,c_i,-,a_j-)(-,a_i,-,a_j,-)(-,a_i-,b_j,-)(-,a_i,-,a_j,-) \cdots\mathbf{0})\\
&\simeq&(\mathbf{0}\cdots(-,a_i,-,a_j,-)(-,b_i,-,a_j,-)(-,c_i,-,a_j,-)(-,a_i,-,a_j,-)\cdots\mathbf{0}).\\
\end{eqnarray*}

In both cases, the final equation gives us a walk shorter than $W$ that contains a $C$, which we know is equivalent to $U_i$ by induction. \\
\end{proof}

%rewrite lemma in words

We present another technical lemma which allows us to rearrange when we have changes in coordinates between vertices in our walks.  
\begin{lemma}\label{L:ReorderConsecutiveChanges}
Let $W$ be a walk of length $n$ such that there is a change in the $i$th coordinate followed by a change in the $j$th coordinate, $j\neq i$.  Then $W\simeq W'$ such that $W$ and $W'$ are the same, except that the change in $i$th coordinate follows the change in $j$th coordinate.
\begin{proof}
So for a walk $W$ consider:
\begin{eqnarray*}
W&=&(\mathbf{0}\cdots(a_1,-,a_i,-,a_j,-,a_n)(a_1,-,b_i,-,a_j,-,a_n)(a_1,-,b_i,-,c_j,-,a_n)\cdots\mathbf{0}) \\
&\simeq&(\mathbf{0}\cdots(a_1,-,a_i,-,a_j,-,a_n)(a_1,-,a_i,-,c_j,-,a_n)(a_1,-,b_i,-,c_j,-,a_n)\cdots\mathbf{0})\\
&=&W'.
\end{eqnarray*}
The second equivalence follows from a spider move. 

Note that $W'$ is identical to $W$ except for these 2 vertices, commuting the changes in coordinates $i$ and $j$.
\end{proof}

\end{lemma}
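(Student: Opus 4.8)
The plan is to prove this as a single application of the second case of Lemma \ref{L:AllowedSpider}, since the statement is exactly a local reordering of two consecutive coordinate changes, and that is precisely the spider move already analyzed there. So rather than re-derive anything, I would phrase the whole argument as an immediate corollary.

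First I would locate the three consecutive vertices of $W$ at which the change in coordinate $i$ is immediately followed by the change in coordinate $j$. Writing these as $v_{k-1}, v_k, v_{k+1}$, the hypothesis says $v_{k-1}$ and $v_k$ differ only in coordinate $i$, while $v_k$ and $v_{k+1}$ differ only in coordinate $j$. Concretely, I would record them as $v_{k-1} = (a_1,-,a_i,-,a_j,-,a_n)$, $v_k = (a_1,-,b_i,-,a_j,-,a_n)$, and $v_{k+1} = (a_1,-,b_i,-,c_j,-,a_n)$, with $b_i \neq a_i$ and $c_j \neq a_j$.

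Next I would invoke Lemma \ref{L:AllowedSpider}. Because $i \neq j$, the two consecutive changes lie in distinct coordinates, so we are in the second case of that lemma. It supplies a unique vertex $v'_k$ adjacent to both $v_{k-1}$ and $v_{k+1}$ with $v'_k \neq v_k$, and moreover tells us that $v_{k-1}, v'_k$ differ in coordinate $j$ and $v'_k, v_{k+1}$ differ in coordinate $i$; explicitly $v'_k = (a_1,-,a_i,-,c_j,-,a_n)$. Replacing $v_k$ by $v'_k$ is therefore a legal spider move, yielding a walk $W'$ with $W \simeq W'$ that agrees with $W$ at every vertex except $v_k$. Along $W'$ the change in coordinate $j$ now precedes the change in coordinate $i$, which is exactly the claimed conclusion.

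There is no serious obstacle here; the only point requiring care is confirming we are in the correct case of Lemma \ref{L:AllowedSpider}. The hypothesis $j \neq i$ is precisely what places us in the second (distinct-coordinate) case and thereby guarantees both the existence and the uniqueness of the replacement vertex $v'_k$. If the two changes were in the same coordinate the reordering would be vacuous, so this hypothesis is doing all the work, and the single spider move then completes the argument.
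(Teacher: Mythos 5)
Your argument is correct and is essentially the paper's own proof: both exhibit the single spider move replacing the middle vertex $(a_1,-,b_i,-,a_j,-,a_n)$ with $(a_1,-,a_i,-,c_j,-,a_n)$, which swaps the order of the two coordinate changes. Your extra step of citing Lemma \ref{L:AllowedSpider} to pin down the replacement vertex is a harmless presentational difference (note only that that lemma is phrased as characterizing an already-given spider pair rather than producing one, but you verify adjacency explicitly, so nothing is missing).
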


A consequence of Lemma \ref{L:ReorderConsecutiveChanges} is that any pair of consecutive changes in different coordinates may be swapped while preserving equivalence.\\

\begin{example}\label{E:changecommute}
    \[\begin{tikzpicture}[scale=1.4]

\foreach \x in {0,...,2}{
\foreach \y in {0,...,2}{

\draw (\x,\y) --node[above right]{\tiny $(\x, \y)$} (\x, \y);

}
};

\foreach \x in {0,...,2}{
\draw (\x, 0)--(\x, 2);
\draw (\x,0)edge[bend left](\x,2);
};

\draw (0,0)--(2,0);
\draw (0,2)--(2,2);
\draw (0,1)--(2,1);
\foreach \y in {0,...,2}{
\draw (0, \y)--(2, \y);
\draw (0,\y)edge[bend right](2,\y);
};

\draw[fill, black] (0,0) circle (2pt);
\draw[fill, black] (1,0) circle (2pt);
\draw[fill, black] (2,0) circle (2pt);
\draw[fill, black] (0,1) circle (2pt);
\draw[fill, black] (1,1) circle (2pt);
\draw[fill, black] (2,1) circle (2pt);
\draw[fill, black] (0,2) circle (2pt);
\draw[fill, black] (1,2) circle (2pt);
\draw[fill, black] (2,2) circle (2pt);

\draw[ultra thick, blue] (1,0)--(2,0)--(2,1); 
\draw[ultra thick, green] (1,0)--(1,1)--(2,1); 
\draw[ultra thick, violet] (0,0)--(1,0); 
\draw[ultra thick, violet] (2,1) edge [bend left] (0,1);
\draw[ultra thick, violet] (0,1)--(0,0);

\end{tikzpicture}\]
 Observe the walks $W$ and $V$ in the above example of $H(2,3)$, where $W=(0,0)(1,0)(2,0)(2,1)(0,1)(0,0)$ and $V=(0,0)(1,0)(1,1)(2,1)(0,1)(0,0)$. Note that $W\simeq V$ since the walks only differ by a spider move. Isolating this spider move, in $W$ we have $(1,0)(2,0)(2,1)$ and in $V$ we have $(1,0)(1,1)(2,1)$ Note that in $W$, there is a change in the first coordinate $i$ followed by a change in the second coordinate $j$. The opposite is true for $V$, where we have a change in $j$ followed by a change in $i$. Since we still start at $(1,0)$ and end at $(2,1)$, we can commute the changes in coordinates $i$ and $j$.
\end{example}

Using the allowed spider moves and reordering properties of Hamming graphs, we are now able to show that the ground walks generate $\Pi(H(d,q))$.

\begin{theorem}\label{T:Generate}
    Let $W$ be a walk in $H(d,q)$ such that $W$ starts and ends at the origin. Then $W$ is equivalent to a walk $W'$ that is the product of ground walks. (This means that $W$ is generated by the ground walks.) 
\end{theorem}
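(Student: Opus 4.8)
The plan is to induct on the length $n$ of the walk $W$, using the reordering and retraction tools already established. Given a closed walk $W$ based at the origin, I track the sequence of coordinate-changes along $W$: each step $v_k \to v_{k+1}$ is a change in exactly one coordinate, say coordinate $c_k \in \{1,\dots,d\}$. Since $W$ is closed and each coordinate starts and ends at $0$, the net effect in each coordinate is trivial; the strategy is to group together the changes belonging to a single coordinate and peel off a ground walk, then invoke the inductive hypothesis on the shorter remaining walk.

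First I would use Lemma \ref{L:ReorderConsecutiveChanges} as the main engine: since any two consecutive changes in distinct coordinates may be swapped while preserving equivalence, I can ``bubble'' changes around to bring changes in the same coordinate adjacent to one another. Concretely, look at the first coordinate $i$ that gets changed in $W$, occurring at some step. By repeatedly applying Lemma \ref{L:ReorderConsecutiveChanges}, I slide the \emph{next} change in coordinate $i$ leftward until it sits immediately after the first one, producing three consecutive vertices all differing only in coordinate $i$ --- that is, a segment of the form $((-,a_i,-)(-,b_i,-)(-,c_i,-)(-,a_i,-))$ after the appropriate bookkeeping, or a prunable/anti-prunable configuration. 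Once I have isolated such a segment $C$ conjugated by some walk $K$, Proposition \ref{L:ThreeWalkRetract} tells me that $C$ is equivalent to a ground walk $U_i$, and factoring it out yields $W \simeq K * U_i * K'$ where the total number of changes in coordinate $i$ has dropped by (at least) two. Iterating within coordinate $i$ exhausts all its changes, each time splitting off a $U_i$; since the net change in coordinate $i$ is zero, the count is even and the process terminates with no residual changes in coordinate $i$.

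The cleanest way to organize the induction is on the total number of coordinate-changes in $W$ (equivalently its length). In the base case $W$ has length $0$ and is the empty product of ground walks. For the inductive step, the peeling procedure above writes $W \simeq U_{i_1} * U_{i_2} * \cdots * U_{i_m} * W_{\mathrm{rest}}$ after conjugating factors back to basepoint loops, where $W_{\mathrm{rest}}$ is strictly shorter; but since each extracted factor is already a ground walk based at $\mathbf 0$, and the conjugating walks $K$ themselves are closed walks of smaller length to which induction applies, I can assume by induction that $W_{\mathrm{rest}}$ (and each conjugate) is a product of ground walks, completing the argument. I would use Lemma \ref{L:Commutativity} only where needed to move the conjugating data into standard order, though it is not strictly required for mere generation.

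The main obstacle I anticipate is the bookkeeping around \emph{conjugation}: when I slide changes together with Lemma \ref{L:ReorderConsecutiveChanges}, the isolated three-change segment $C$ is not based at the origin but sits in the middle of $W$, so it appears as $K * C * K^{-1}$ only after I verify that the prefix returns to and the suffix departs from a common vertex --- this is exactly the hypothesis format of Proposition \ref{L:ThreeWalkRetract}, so the care is in checking that the reorderings produce precisely that shape rather than a more tangled configuration. A secondary subtlety is ensuring the reordering moves never increase the length and that the ``first change in coordinate $i$'' can always be paired with a later one; this follows because the closed-walk condition forces an even number of net-nontrivial changes in each coordinate, but I would want to state carefully that intermediate changes in \emph{other} coordinates can always be commuted out of the way via Lemma \ref{L:ReorderConsecutiveChanges} without obstruction.
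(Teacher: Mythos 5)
Your proposal is correct and follows essentially the same route as the paper: induct on the length of $W$, use Lemma \ref{L:ReorderConsecutiveChanges} to bring two changes in the same coordinate into adjacent positions, anti-prune to close up a three-change segment, apply Proposition \ref{L:ThreeWalkRetract} to identify it with a ground walk, and recurse on the shorter remainder. The conjugation bookkeeping you flag is handled in the paper by inserting $C^{-1}*C$ to split $W$ into a conjugated closed three-walk times a shorter based loop, exactly as you anticipate.
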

\begin{proof}
    We proceed by induction on the length of $W$.
For $n=3$, let $W=(\mathbf{0}(0,-, a_i-,0)(0,-,b_i,-,0)\mathbf{0})$. The only possibility for a walk of length 3 that will start and end at the origin is $U_i$, or it's inverse, which is a ground walk.\\

For $n>3$, let $W$ be an arbitrary walk of length n that starts and ends at the origin. We want to show $W$ is a product of walks generated by the ground walks. \\
Note that there are two cases. The first case is that somewhere in $W$, there exists two consecutive changes in the same coordinate. In that case:

\begin{eqnarray*}
W&=&(\mathbf{0}\cdots(a_1, -, a_i, -, a_d)(a_1, -, b_i, -, a_d)(a_1, -, c_i, -, a_d) \cdots \mathbf{0} )\\
&\simeq& C *(a_1, -, a_i, -, a_d)(a_1, -, b_i, -, a_d)(a_1, -, c_i, -, a_d)* K \\
&\simeq& (C*(a_1, -, a_i, -, a_d)(a_1, -, b_i, -, a_d)(a_1, -, c_i, -, a_d)(a_1, -, a_i, -, a_d)* C^{-1})\\ 
&&*(C* (a_1, -, a_i, -, a_d)(a_1, -, c_i, -, a_d)* K).\\
\end{eqnarray*}

In this last equivalence, $(C*(a_1, -, a_i, -, a_d)(a_1, -, b_i, -, a_d)(a_1, -, c_i, -, a_d)(a_1, -, a_i, -, a_d)* C^{-1})$ is equivalent to a ground walk by Proposition \ref{L:ThreeWalkRetract}. We are therefore left with a walk $(C* (a_1, -, a_i, -, a_d)(a_1, -, c_i, -, a_d)* K)$ shorter than $W$ that starts and ends at the origin, which by induction is generated by the ground walks. %Needs expanding
\\ 
    
The second case is that there are not two consecutive changes in the same coordinate in $W$. Since our walk must start and end at the origin, if we've made a change to  coordinate $i$ in our walk, there must be a subsequent change in $i$ in our walk. In that case:
By Lemma \ref{L:ReorderConsecutiveChanges}, we can permute the changes in the walk such that the changes in the $i$th coordinate are consecutive. We now have the case where there are two consecutive changes in the same coordinate, so this walk is also generated by the ground walks by induction.
\end{proof}

\begin{example}\label{generate}
    \[\begin{tikzpicture}[scale=1.4]

\foreach \x in {0,...,2}{
\foreach \y in {0,...,2}{

\draw (\x,\y) --node[above right]{\tiny $(\x, \y)$} (\x, \y);

}
};

\foreach \x in {0,...,2}{
\draw (\x, 0)--(\x, 2);
\draw (\x,0)edge[bend left](\x,2);
};

\draw (0,0)--(2,0);
\draw (0,2)--(2,2);
\draw (0,1)--(2,1);
\foreach \y in {0,...,2}{
\draw (0, \y)--(2, \y);
\draw (0,\y)edge[bend right](2,\y);
};

\draw[fill, black] (0,0) circle (2pt);
\draw[fill, black] (1,0) circle (2pt);
\draw[fill, black] (2,0) circle (2pt);
\draw[fill, black] (0,1) circle (2pt);
\draw[fill, black] (1,1) circle (2pt);
\draw[fill, black] (2,1) circle (2pt);
\draw[fill, black] (0,2) circle (2pt);
\draw[fill, black] (1,2) circle (2pt);
\draw[fill, black] (2,2) circle (2pt);

\draw[ultra thick, blue] (0,0)--(1,0)--(1,1)--(2,1)--(2,2) edge[bend left] (0,2);
\draw[ultra thick, blue] (0,2)--(0,1)--(0,0);

\end{tikzpicture}\]
Pictured above is the walk $W=(0,0)(1,0)(1,1)(2,1)(2,2)(0,2)(0,1)(0,0)$.
Observe the following series of spider moves and prunes:
\begin{eqnarray*}
W&=&(0,0)(1,0)(1,1)(2,1)(2,2)(0,2)(0,1)(0,0)\\
&\simeq&(0,0)(1,0)(2,0)(2,1)(2,2)(0,2)(0,1)(0,0)\\
&\simeq&(0,0)(1,0)(2,0)(2,1)(0,1)(0,2)(0,1)(0,0)\\
&\simeq&(0,0)(1,0)(2,0)(0,0)(0,1)(0,2)(0,1)(0,0)\\
&\simeq&(0,0)(1,0)(2,0)(0,0)(0,1)(0,2)(0,1)(0,0)\\
&\simeq&(0,0)(1,0)(2,0)(0,0)(0,1)(0,0)\\
&\simeq&(0,0)(1,0)(2,0)(0,0).
\end{eqnarray*}
This equivalence produces the following graph:
    \[\begin{tikzpicture}[scale=1.4]

\foreach \x in {0,...,2}{
\foreach \y in {0,...,2}{

\draw (\x,\y) --node[above right]{\tiny $(\x, \y)$} (\x, \y);

}
};

\foreach \x in {0,...,2}{
\draw (\x, 0)--(\x, 2);
\draw (\x,0)edge[bend left](\x,2);
};

\draw (0,0)--(2,0);
\draw (0,2)--(2,2);
\draw (0,1)--(2,1);
\foreach \y in {0,...,2}{
\draw (0, \y)--(2, \y);
\draw (0,\y)edge[bend right](2,\y);
};

\draw[fill, black] (0,0) circle (2pt);
\draw[fill, black] (1,0) circle (2pt);
\draw[fill, black] (2,0) circle (2pt);
\draw[fill, black] (0,1) circle (2pt);
\draw[fill, black] (1,1) circle (2pt);
\draw[fill, black] (2,1) circle (2pt);
\draw[fill, black] (0,2) circle (2pt);
\draw[fill, black] (1,2) circle (2pt);
\draw[fill, black] (2,2) circle (2pt);

\draw[ultra thick, blue] (0,0)--(1,0)--(2,0) edge[bend left] (0,0);

\end{tikzpicture}\]

Note that this is a graph consisting of one of the ground walks, thus confirming that $W$ is equivalent to a product of ground walks. 

By Lemma \ref{L:GroundWalksDifferent}, the ground walks are not just a generating set for $\Pi(H(d,q))$, but a minimal generating set. 
 We now address the order of the ground walks.  Note that the ground walk $U_i$ is fully contained in a subgraph where only the $i$th coordinate is allowed to be non-zero, which is isomorphic to $K_q$, and we will use this fact in our computations.

\end{example}
\begin{lemma}\label{L:Order}
Let $U_i$ be a ground walk in $\Pi(H(d,q))$. Then, the order of $U_i$ is infinite when $q=3$ and the order of $U_i$ is 2 when $q\neq3$. 
\end{lemma}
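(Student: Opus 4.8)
The plan is to detect the order of $U_i$ by comparing $\Pi(H(d,q))$ with $\Pi(K_q)$, whose structure is given by Proposition \ref{P:Kn}, using two complementary tools: a homomorphism \emph{out of} $H(d,q)$ onto $K_q$ to bound the order from below, and the inclusion of the $K_q$-subgraph containing $U_i$ to bound it from above. Throughout I assume $q \geq 3$, since for $q = 2$ there are no ground walks and $\Pi(H(d,2))$ is trivial by Corollary \ref{C:trivial}; thus the case ``$q \neq 3$'' is really $q \geq 4$. Identify the alphabet $S$ with $\mathbb{Z}/q$ so that the origin is $0$ and $K_q$ has vertex set $\mathbb{Z}/q$.

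First I would introduce the coordinate-sum map $\sigma \colon H(d,q) \to K_q$ given by $\sigma(x_1,\ldots,x_d) = x_1 + \cdots + x_d \pmod q$. This is a graph homomorphism: if $u \sim v$ then $u$ and $v$ differ in a single coordinate $j$ with $u_j \neq v_j$, whence $\sigma(u) - \sigma(v) = u_j - v_j \not\equiv 0 \pmod q$, so $\sigma(u) \sim \sigma(v)$ in $K_q$. Since graph homomorphisms carry spider moves to spider moves (or to equalities), prunes to prunes, and anti-prunes to anti-prunes, and since $\sigma(\mathbf{0}) = 0$, the map $\sigma$ descends by functoriality to a group homomorphism $\sigma_* \colon \Pi(H(d,q)) \to \Pi(K_q)$. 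Choosing $U_i = (\mathbf{0}\,(0,-,1_i,-,0)\,(0,-,2_i,-,0)\,\mathbf{0})$, which is legitimate by Remark \ref{R:Uniqueness}, we compute $\sigma_*(U_i) = (0\,1\,2\,0)$, a closed three-walk in $K_q$. This class is nontrivial because length parity $\bmod\ 2$ is a homotopy invariant (prunes and anti-prunes change length by $\pm 2$, spider moves preserve it), so an odd-length closed walk is never equivalent to $(\mathbf{0})$; this is exactly the reasoning already used in Example \ref{H(2,3)}.

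I would then split into the two cases. When $q = 3$, Proposition \ref{P:Kn} gives $\Pi(K_3) \cong \mathbb{Z}$, in which every nonzero element has infinite order; since $\sigma_*(U_i) \neq 0$ and a homomorphism cannot send an element of finite order to one of infinite order, $U_i$ has infinite order. When $q \geq 4$, I bound the order from both sides. For the lower bound, the parity invariant established in the proof of Lemma \ref{L:GroundWalksDifferent} — the number of changes in coordinate $i$ is invariant $\bmod\ 2$ under spider moves and changes by $\pm 2$ under (anti-)prunes — shows that $U_i$, which makes an odd number (three) of changes in coordinate $i$, is not equivalent to $(\mathbf{0})$, so the order is at least $2$. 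For the upper bound, observe that $U_i$ lies entirely in the induced subgraph $K_q^{(i)} = \{(0,-,x_i,-,0) : x_i \in S\} \cong K_q$; by Proposition \ref{P:Kn}, $\Pi(K_q^{(i)}) \cong \mathbb{Z}/2$, so $U_i^2 \simeq (\mathbf{0})$ through moves taking place inside $K_q^{(i)}$. Because $K_q^{(i)}$ is an induced subgraph, those same moves are valid in $H(d,q)$, so $U_i^2 \simeq (\mathbf{0})$ in $\Pi(H(d,q))$ and the order is at most $2$; combining, it equals $2$.

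The main obstacle is the $q = 3$ claim. The parity invariant that cleanly settles $q \geq 4$ only records order-$2$ information and cannot by itself rule out $U_i^2 \simeq (\mathbf{0})$, so it is useless for proving infinite order. The essential new ingredient is the homomorphism $\sigma_*$ into $\Pi(K_3) \cong \mathbb{Z}$, which supplies an integer ``winding number'' refining the parity count; once one verifies that $\sigma$ is a genuine graph homomorphism and that it descends to fundamental groups, infinitude of the order of $U_i$ is immediate. I would therefore devote the most care to justifying the homomorphism property of $\sigma$ and its compatibility with the homotopy relation.
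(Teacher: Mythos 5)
Your proof is correct, but it reaches the $q=3$ conclusion by a genuinely different route than the paper. The paper constructs an explicit infinite cyclic cover of $H(2,3)$ (an infinite strip of copies of the grid), observes that the ground walk lifts to a non-closed walk translating by one copy while the other ground walk lifts to a closed walk, and then invokes Proposition \ref{P:index} to conclude infinite order --- and it only carries this out for $d=2$, dismissing higher $d$ as ``tedious but similar.'' You instead push forward along the coordinate-sum homomorphism $\sigma\colon H(d,q)\to K_q$ and reduce everything to the torsion-freeness of $\Pi(K_3)\cong\mathbb{Z}$ from Proposition \ref{P:Kn}. This is cleaner and, importantly, uniform in $d$: it removes the hand-waving at the generalization step entirely. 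The one load-bearing ingredient you should pin down with a citation is the functoriality of $\Pi$ under graph homomorphisms (that $\sigma$ carries spider moves, prunes, and anti-prunes to the corresponding moves or to degenerate ones, hence descends to $\sigma_*$ on based homotopy classes); this is established in the Chih--Scull framework but is not stated in this paper's preliminaries, so it deserves an explicit reference rather than the phrase ``by functoriality.'' For $q\geq 4$ your upper bound coincides with the paper's argument (homotope $U_i^2$ to the trivial walk inside the induced $K_q$ subgraph), but you are actually more careful than the paper: the paper asserts ``$U_i$ has order $2$'' directly from $\Pi(K_q)\cong\mathbb{Z}/2$, which only bounds the order above since the inclusion-induced map need not be injective a priori, whereas you supply the missing lower bound via the parity of coordinate changes. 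The trade-off is that the paper's explicit cover is recycled in its later examples of covers of $H(2,3)$, so its construction does double duty, while your argument proves the lemma and nothing more.
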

\begin{proof}
We begin with the case where $q=3$. In this case, consider the following cover of $H(3,2)$:

\[\begin{tikzpicture}[scale=1.4]

\foreach \i in {-1,...,1}{
\foreach \x in {0,...,2}{
\foreach \y in {0,...,2}{

\draw (\x+3*\i,\y) --node[below right]{\tiny $(\x, \y)_{\i}$} (\x+3*\i, \y);

}
}
};

\foreach \y in {0,...,2}{
\draw (-4,\y) --node{$\cdots$} (-4, \y);
\draw (6,\y) --node{$\cdots$} (6, \y);
};

\foreach \x in {-3,...,5}{
\draw (\x, 0)--(\x, 2);
\draw (\x,0)edge[bend left](\x,2);
};

\draw (-3.5,0)--(5.5,0);
\draw (-3.5,2)--(5.5,2);
\draw (-3.5,1)--(5.5,1);

%-1
\draw[fill, violet] (0-3,0) circle (2pt);
\draw[fill, red] (1-3,0) circle (2pt);
\draw[fill, teal] (2-3,0) circle (2pt);
\draw[fill, violet] (0-3,1) circle (2pt);
\draw[fill, red] (1-3,1) circle (2pt);
\draw[fill, teal] (2-3,1) circle (2pt);
\draw[fill, violet] (0-3,2) circle (2pt);
\draw[fill, red] (1-3,2) circle (2pt);
\draw[fill, teal] (2-3,2) circle (2pt);

\draw[ thick, violet] (0-3,0) circle (2.2pt);
\draw[thick,  red] (0-3,1) circle (2.2pt);
\draw[ thick,  teal] (0-3,2) circle (2.2pt);
\draw[thick,  violet] (1-3,0) circle (2.2pt);
\draw[thick,  red] (1-3,1) circle (2.2pt);
\draw[thick,  teal] (1-3,2) circle (2.2pt);
\draw[thick,  violet] (2-3,0) circle (2.2pt);
\draw[thick,  red] (2-3,1) circle (2.2pt);
\draw[thick,  teal] (2-3,2) circle (2.2pt);

%0

\draw[fill, violet] (0,0) circle (2pt);
\draw[fill, red] (1,0) circle (2pt);
\draw[fill, teal] (2,0) circle (2pt);
\draw[fill, violet] (0,1) circle (2pt);
\draw[fill, red] (1,1) circle (2pt);
\draw[fill, teal] (2,1) circle (2pt);
\draw[fill, violet] (0,2) circle (2pt);
\draw[fill, red] (1,2) circle (2pt);
\draw[fill, teal] (2,2) circle (2pt);

\draw[ thick, violet] (0,0) circle (2.2pt);
\draw[thick,  red] (0,1) circle (2.2pt);
\draw[ thick,  teal] (0,2) circle (2.2pt);
\draw[thick,  violet] (1,0) circle (2.2pt);
\draw[thick,  red] (1,1) circle (2.2pt);
\draw[thick,  teal] (1,2) circle (2.2pt);
\draw[thick,  violet] (2,0) circle (2.2pt);
\draw[thick,  red] (2,1) circle (2.2pt);
\draw[thick,  teal] (2,2) circle (2.2pt);

%1

\draw[fill, violet] (0+3,0) circle (2pt);
\draw[fill, red] (1+3,0) circle (2pt);
\draw[fill, teal] (2+3,0) circle (2pt);
\draw[fill, violet] (0+3,1) circle (2pt);
\draw[fill, red] (1+3,1) circle (2pt);
\draw[fill, teal] (2+3,1) circle (2pt);
\draw[fill, violet] (0+3,2) circle (2pt);
\draw[fill, red] (1+3,2) circle (2pt);
\draw[fill, teal] (2+3,2) circle (2pt);

\draw[ thick, violet] (0+3,0) circle (2.2pt);
\draw[thick,  red] (0+3,1) circle (2.2pt);
\draw[ thick,  teal] (0+3,2) circle (2.2pt);
\draw[thick,  violet] (1+3,0) circle (2.2pt);
\draw[thick,  red] (1+3,1) circle (2.2pt);
\draw[thick,  teal] (1+3,2) circle (2.2pt);
\draw[thick,  violet] (2+3,0) circle (2.2pt);
\draw[thick,  red] (2+3,1) circle (2.2pt);
\draw[thick,  teal] (2+3,2) circle (2.2pt);

\end{tikzpicture}\]

To show that this is a cover, we need a covering map so that $N((a,b)_i)\in V(C)$ maps bijectively to $N((a,b))$, and each 4-cycle in $H(3,2)$ lifts to a  4-cycle in $C$. We note that this graph $C$ has vertex set $\{v_i: v\in V(H(2,3)), i\in \mathbb{Z}\}$ and $v_i\sim w_j$ if $v=(a,x), w=(a,y)$ and $i=j$, if $v=(1,x), w=(a,x), a\neq 1$ and $i=j$ or $v=(2,x), w=(0,x), j=i+1$.  Since each 4-cycle in $H(2,3)$ has vertices of the form $(a,x), (b,x), (b,y), (a,y)$ one can verify that a closed 4-walk $(a,x)(b,x)(b,y)(a,y)$ lifts to a closed 4-walk in $C$. Thus,  $C$ is a cover with covering map $\rho:C\to H(2,3), v_i\mapsto v$.  Note that the ground walk $((0,0)(1,0)(2,0)(0,0))$ lifts to a non-closed walk $((0,0)_i(1,0)_i,(2,0)_i(0,0)_{i+1})$ but $((0,0)(0,1)(0,2)(0,0))$ lifts to closed walk $((0,0)_i(0,1)_i,(0,2)_i(0,0)_{i})$.  We also note that $C$ is infinite, so by Proposition \ref{P:index}, we have that the subgroup of $\Pi(H(2,3))$ generated by $U_2=((0,0)(0,1)(0,2)(0,0))$ has infinite index in $\Pi(H(2,3))$ so the subgroup generated by $U_1=((0,0)(1,0)(2,0)(0,0))$ has infinite order and thus $U_1$ has infinite order.  A similar argument shows that $U_2$ has infinite order as well.  Generalizing this to higher dimension produces tedious but similar arguments.

%To show that this is a cover, we need each $(a,b)_i$ to have all the appropriate neighbors, and each 4-cycle in $H(3,2)$ has a corresponding 4-cycle in cover.

%We see that for each $i$ there is a corresponding vertex  $(0, 0)_i$ in the cover, along with walk $(0,0)_i, (1,0)_i, (2,0)_i, (0,0)_{i+1}$.  Since the cover is infinite, and is a quotient of the universal cover of $H(3,2)$ the generator of these walks: ($U_1$) has infinite order \cite{CS3}.  We can similarly show that any $U_i$ has infinite order for any $d$.

In the second case, where $q\neq 3$, note that the induced subgraph of vertices $\{(0,-, x_i,-0): 0\leq x_i\leq q-1\}$ is isomorphic to $K_q$. Then , by Proposition \ref{P:Kn} any length 3 closed walk within this subgraph has order 2 since $q>3$.  Since $U_i$ is a closed 3-walk in this subgraph, $U_i$,  has order 2. %\cite{BM, CS3}
\end{proof}

Using the properties we have demonstrated in this section, we can now compute the fundamental groups for all Hamming graphs.

\begin{theorem}\label{T:FundamentalGroups}
    $\Pi(H(d,3))\cong \mathbb{Z}^d$, and  $\Pi(H(d,q))\cong \mathbb{Z}_2^d$ when $q\neq 3$.
\end{theorem}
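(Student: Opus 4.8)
The plan is to assemble the final result from the structural lemmas already established, treating the cases $q=3$ and $q \neq 2,3$ in parallel and invoking Corollary \ref{C:trivial} for $q=2$. The fundamental group $\Pi(H(d,q))$ consists of equivalence classes of closed walks at the origin, and Theorem \ref{T:Generate} tells us that every such class is a product of the ground walks $U_1, \ldots, U_d$. Thus the $U_i$ form a generating set, and the task reduces to identifying the relations they satisfy and showing there are no others.

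First I would fix the generators $U_1, \ldots, U_d$, one ground walk per coordinate, and record the three ingredients that pin down the group structure. Lemma \ref{L:Commutativity} shows $U_i U_j \simeq U_j U_i$ for all $i \neq j$, so the group is abelian and is a quotient of the free abelian group (or free product of cyclics) on the $U_i$. Lemma \ref{L:Order} computes the order of each individual generator: infinite when $q=3$, and exactly $2$ when $q \neq 3$. Combining commutativity with these orders, I would argue that the group is a quotient of $\mathbb{Z}^d$ (when $q=3$) or $\mathbb{Z}_2^d$ (when $q \neq 2,3$), since every element can be written uniquely as $U_1^{e_1} \cdots U_d^{e_d}$ with exponents ranging over $\mathbb{Z}$ or $\mathbb{Z}/2$ respectively.

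The main obstacle is proving that no \emph{further} relations collapse this quotient; that is, showing the generators are genuinely independent so that the quotient is the full direct product rather than a proper quotient of it. This is exactly where Lemma \ref{L:GroundWalksDifferent} enters: it establishes that no $U_i$ is a product of the remaining $U_j$, via the parity-of-coordinate-changes invariant that is preserved under spider moves, prunes, and anti-prunes. I would strengthen the bookkeeping from that lemma into a homomorphism $\Phi \colon \Pi(H(d,q)) \to A^d$ (with $A = \mathbb{Z}$ or $\mathbb{Z}/2$) recording, in coordinate $i$, the signed (respectively mod-$2$) number of changes in the $i$th coordinate of a walk. One checks $\Phi$ is well defined on equivalence classes precisely because each of the three move types alters the count in each coordinate by $0$ or $\pm 2$ (prunes/anti-prunes) or by nothing (spider moves, by Lemma \ref{L:AllowedSpider}). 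Since $\Phi(U_i)$ is the standard generator $e_i$, the map $\Phi$ is surjective and separates the generators, forcing $\Phi$ to be an isomorphism.

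Concretely, I would present the argument as: the $U_i$ generate (Theorem \ref{T:Generate}); they commute (Lemma \ref{L:Commutativity}); the generator orders are as claimed (Lemma \ref{L:Order}); and the counting homomorphism $\Phi$ is an isomorphism onto $\mathbb{Z}^d$ or $\mathbb{Z}_2^d$, with injectivity following because a class in the kernel has all coordinate-change counts trivial and hence, after using commutativity to group the changes coordinate-by-coordinate and Lemma \ref{L:Order} to cancel within each coordinate, reduces to the trivial walk. The $q=2$ case is immediate from Corollary \ref{C:trivial}, and one should note that $\mathbb{Z}_2^d$ in the statement is consistent with the degenerate $q=2$ situation only after restricting to $q \neq 2$; I would state the theorem for $q \neq 2$ and cite the corollary separately for the excluded case.
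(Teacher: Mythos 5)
Your proposal follows the same route as the paper: both assemble the result from Theorem \ref{T:Generate} (the ground walks generate), Lemma \ref{L:Commutativity} (they commute), Lemma \ref{L:Order} (their orders), and Lemma \ref{L:GroundWalksDifferent} (their independence). The difference is in the final assembly. The paper argues informally that the minimal number of generators is $d$, the group is abelian, and each generator has the stated order, and then concludes the group is $\mathbb{Z}^d$ or $\mathbb{Z}_2^d$; for $q=3$ this step is actually looser than it looks, since an abelian group minimally generated by $d$ elements of infinite order, none of which is a product of the others, need not be free abelian (e.g.\ $\mathbb{Z}^2/\langle(2,-2)\rangle$ satisfies all three conditions), and the parity invariant of Lemma \ref{L:GroundWalksDifferent} only sees exponents mod $2$. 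Your explicit homomorphism $\Phi$ recording the \emph{signed} count of changes in each coordinate is exactly what closes this for $q=3$: it is invariant under prunes and anti-prunes (which insert or delete a cancelling pair) and under spider moves (which, for $q=3$, either reorder changes across coordinates or replace a returning pair by another returning pair), and it kills any hidden relation among the $U_i$. Two small corrections: $\Phi(U_i)$ is $3e_i$ rather than $e_i$ in the $q=3$ case, so $\Phi$ is injective with image a finite-index copy of $\mathbb{Z}^d$ rather than surjective; and your observation that the theorem statement should exclude $q=2$ (where Corollary \ref{C:trivial} gives the trivial group) is correct and worth making explicit, since the paper's hypothesis ``$q\neq 3$'' literally covers $q=2$ even though the section heading excludes it.
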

\begin{proof}
    For a walk $W$ in $H(d,q)$, we know that $W$ is generated by the ground walks by Theorem \ref{T:Generate}. We also know that the number of generators is at most $d$, since $d$ is the number of ground walks. Lemma \ref{L:GroundWalksDifferent} tells use that $U_i$ is not generated by the other $U_j$ $j\neq i$, so the number of generators is at least $d$. Thus, the minimal number of generators is $d$. We know that the walks commute from Lemma \ref{L:Commutativity}, and thus $\Pi(H(d,q))$ is abelian. When $q=3$, we know that the order of the ground walks is infinite by Lemma \ref{L:Order}. Thus, $\Pi(H(d,3))\cong \mathbb{Z}^d$. Lemma \ref{L:Order} then tells us that for $q\neq 3$ the order of the ground walks is 2. Thus, $\Pi(H(d,q))\cong \mathbb{Z}_2^d$.\\

   % We begin with the case where $q=3$. By Lemma \ref{L:GroundWalksDifferent}, we know the ground walks are different. By Lemma \ref{L:Commutativity}, we know the ground walks commute. Some Lemma tells us that a ground walk $U_i$ is not self inverse. By Theorem \ref{T:Generate}, we know any walk in $H(3,d)$ will generate. By Lemma \ref{L:Order}, we know the order is infinity. Therefore,  $\Pi_v^v(H(3,d))\cong \mathbb{Z}^d$. 
   % The second case is when $q\neq 3$. Like in the $q=3$ case, the conditions that the ground walks are different and commute still hold by Lemma \ref{L:GroundWalksDifferent} and Lemma \ref{L:Commutativity}. Some Lemma tells us that a ground walk $U_i$ is self inverse. Theorem \ref{T:Generate}, tells us that any walk in $H(d,q)$ will also generate. By Lemma \ref{L:Order}, we know the order when $q\neq 3$ is 2. Therefore, $\Pi_v^v(H(d,q))\cong \mathbb{Z}_2^d$.

\end{proof}

\begin{remark}\label{C:UC}
For $q=3$, we have that $|\Pi(H(d,3))|=\infty$, and so by Proposition \ref{P:index}, that each vertex $v\in H(d,3)$ lifts to an infinite number of possible points in $U$, its universal cover, and $U$ is infinite.  Also since $\Pi(H(d,3))$ has infinitely many subgroups of different indices, $H(d,3)$ has infinitely many covers (up to isomorphism).

On the other hand $v\in H(d,q)$ for $q>3$ lifts to $2^d$ vertices in its universal cover $U$, and thus $|V(U)|=q^d\cdot 2^d$ and $U$ is finite.
\end{remark}

\begin{corollary}
    
We now provide examples of covers in $H(2,3)$ where we have ground walks $U_1$ and $U_2$. 

\end{corollary}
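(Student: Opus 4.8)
The plan is to leverage the identification $\Pi(H(2,3)) \cong \mathbb{Z}^2 = \langle U_1, U_2\rangle$ from Theorem \ref{T:FundamentalGroups} together with the classification of covers in Proposition \ref{P:classifycover}: every connected homotopy cover arises as $U/S$ for a subgroup $S \leq \Pi(H(2,3))$, and by Proposition \ref{P:index} the size of a fibre equals the index $[\Pi(H(2,3)) : S]$. Since the subgroups of $\mathbb{Z}^2$ are completely understood, each choice of $S$ yields an explicit cover, and I would exhibit a few illustrative cases rather than attempt an exhaustive list.

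First I would recall the infinite cover $C$ already built in the proof of Lemma \ref{L:Order}: there $U_2$ lifts to a closed walk while $U_1$ lifts to a non-closed walk, so $C$ realizes the subgroup $S = \langle U_2\rangle$, which has infinite index. Swapping the roles of the two coordinates gives the analogous cover realizing $\langle U_1\rangle$. To produce finite covers I would instead quotient these ``unrolled'' graphs: the index-$n$ subgroup $\langle U_1^{\,n}, U_2\rangle$ should produce a finite cyclic cover obtained by gluing $n$ copies of $H(2,3)$ into a cycle along the first coordinate, and by Proposition \ref{P:index} its fibres have size $n$. In each case I would verify the covering property directly from Definition \ref{D:hocover}, checking that the defining $4$-cycles (diamonds) of $H(2,3)$ lift to $4$-cycles, so that length-two walks lift bijectively and the induced bijection on $N_2$ respects endpoints, exactly as was done for $C$.

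Finally I would describe the universal cover $U$, which corresponds to the trivial subgroup $S = \{e\}$, as the full two-dimensional lattice of copies of the two $K_3$-subgraphs unrolled in both coordinate directions; by Remark \ref{C:UC} this cover is infinite, and every other cover is a quotient of it. The main obstacle I anticipate is the bookkeeping in confirming the homotopy-covering condition for each constructed graph: one must check not merely that vertex neighborhoods lift bijectively but that the induced bijection on $N_2$ respects endpoints, which reduces to verifying that every diamond in $H(2,3)$ lifts to a diamond and that no spurious identifications of lifted walks occur. Correctly matching each constructed graph to its subgroup $S$, by determining precisely which products of $U_1$ and $U_2$ lift to closed walks, is the step that requires the most care.
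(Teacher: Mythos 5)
Your proposal matches the paper's treatment: the "corollary" is really an announcement of examples, and the paper likewise realizes covers of $H(2,3)$ as quotients $U/S$ for subgroups $S\leq\Pi(H(2,3))\cong\mathbb{Z}^2$ via Propositions \ref{P:classifycover} and \ref{P:index}, exhibiting the universal cover as the doubly unrolled lattice and finite covers such as $U/\langle U_1^2,U_2\rangle$, $U/\langle U_1,U_2^2\rangle$, and $U/\langle U_1^3,U_2^2\rangle$ by gluing copies of $H(2,3)$ along the coordinate directions. Your additional check that diamonds lift (Definition \ref{D:hocover}) and your identification of which products of $U_1,U_2$ lift to closed walks are exactly the verifications the paper carries out informally.
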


\begin{example}\label{E: universal}

\[\begin{tikzpicture}[scale=1]

\foreach \i in {-1,...,1}{
\foreach \j in {-1,...,1}{
\foreach \x in {0,...,2}{
\foreach \y in {0,...,2}{

\draw (\x+3*\j,\y+3*\i) --node[above right]{\tiny $(\x, \y)_{\j,\i}$} (\x+3*\j, \y+3*\i);
%too much happening
}
}
}
};

\foreach \x in {-3,...,5}{
\draw (\x, -3.5)--(\x, 5.5);
};

\foreach \y in {-3,...,5}{
\draw (-3.5, \y)--(5.5, \y);
%\draw (\x,0)edge[bend left](\x,2);
};

\foreach \x in {-3,...,5}{
\draw (\x,-4) --node{$\vdots$} (\x, -4);
\draw (\x,6) --node{$\vdots$} (\x, 6);
};

\foreach \y in {-3,...,5}{
\draw (-4,\y) --node{$\cdots$} (-4, \y);
\draw (6,\y) --node{$\cdots$} (6, \y);
};

%btmleft
\draw[fill, violet] (0-3,0-3) circle (2pt);
\draw[fill, red] (1-3,0-3) circle (2pt);
\draw[fill, teal] (2-3,0-3) circle (2pt);
\draw[fill, violet] (0-3,1-3) circle (2pt);
\draw[fill, red] (1-3,1-3) circle (2pt);
\draw[fill, teal] (2-3,1-3) circle (2pt);
\draw[fill, violet] (0-3,2-3) circle (2pt);
\draw[fill, red] (1-3,2-3) circle (2pt);
\draw[fill, teal] (2-3,2-3) circle (2pt);

\draw[ thick, violet] (0-3,0-3) circle (2.2pt);
\draw[thick,  red] (0-3,1-3) circle (2.2pt);
\draw[ thick,  teal] (0-3,2-3) circle (2.2pt);
\draw[thick,  violet] (1-3,0-3) circle (2.2pt);
\draw[thick,  red] (1-3,1-3) circle (2.2pt);
\draw[thick,  teal] (1-3,2-3) circle (2.2pt);
\draw[thick,  violet] (2-3,0-3) circle (2.2pt);
\draw[thick,  red] (2-3,1-3) circle (2.2pt);
\draw[thick,  teal] (2-3,2-3) circle (2.2pt);

%topleft
\draw[fill, violet] (0-3,0+3) circle (2pt);
\draw[fill, red] (1-3,0+3) circle (2pt);
\draw[fill, teal] (2-3,0+3) circle (2pt);
\draw[fill, violet] (0-3,1+3) circle (2pt);
\draw[fill, red] (1-3,1+3) circle (2pt);
\draw[fill, teal] (2-3,1+3) circle (2pt);
\draw[fill, violet] (0-3,2+3) circle (2pt);
\draw[fill, red] (1-3,2+3) circle (2pt);
\draw[fill, teal] (2-3,2+3) circle (2pt);

\draw[ thick, violet] (0-3,0+3) circle (2.2pt);
\draw[thick,  red] (0-3,1+3) circle (2.2pt);
\draw[ thick,  teal] (0-3,2+3) circle (2.2pt);
\draw[thick,  violet] (1-3,0+3) circle (2.2pt);
\draw[thick,  red] (1-3,1+3) circle (2.2pt);
\draw[thick,  teal] (1-3,2+3) circle (2.2pt);
\draw[thick,  violet] (2-3,0+3) circle (2.2pt);
\draw[thick,  red] (2-3,1+3) circle (2.2pt);
\draw[thick,  teal] (2-3,2+3) circle (2.2pt);

%-1y
\draw[fill, violet] (0,0-3) circle (2pt);
\draw[fill, red] (1,0-3) circle (2pt);
\draw[fill, teal] (2,0-3) circle (2pt);
\draw[fill, violet] (0,1-3) circle (2pt);
\draw[fill, red] (1,1-3) circle (2pt);
\draw[fill, teal] (2,1-3) circle (2pt);
\draw[fill, violet] (0,2-3) circle (2pt);
\draw[fill, red] (1,2-3) circle (2pt);
\draw[fill, teal] (2,2-3) circle (2pt);

\draw[ thick, violet] (0,0-3) circle (2.2pt);
\draw[thick,  red] (0,1-3) circle (2.2pt);
\draw[ thick,  teal] (0,2-3) circle (2.2pt);
\draw[thick,  violet] (1,0-3) circle (2.2pt);
\draw[thick,  red] (1,1-3) circle (2.2pt);
\draw[thick,  teal] (1,2-3) circle (2.2pt);
\draw[thick,  violet] (2,0-3) circle (2.2pt);
\draw[thick,  red] (2,1-3) circle (2.2pt);
\draw[thick,  teal] (2,2-3) circle (2.2pt);

%-1x
\draw[fill, violet] (0-3,0) circle (2pt);
\draw[fill, red] (1-3,0) circle (2pt);
\draw[fill, teal] (2-3,0) circle (2pt);
\draw[fill, violet] (0-3,1) circle (2pt);
\draw[fill, red] (1-3,1) circle (2pt);
\draw[fill, teal] (2-3,1) circle (2pt);
\draw[fill, violet] (0-3,2) circle (2pt);
\draw[fill, red] (1-3,2) circle (2pt);
\draw[fill, teal] (2-3,2) circle (2pt);

\draw[ thick, violet] (0-3,0) circle (2.2pt);
\draw[thick,  red] (0-3,1) circle (2.2pt);
\draw[ thick,  teal] (0-3,2) circle (2.2pt);
\draw[thick,  violet] (1-3,0) circle (2.2pt);
\draw[thick,  red] (1-3,1) circle (2.2pt);
\draw[thick,  teal] (1-3,2) circle (2.2pt);
\draw[thick,  violet] (2-3,0) circle (2.2pt);
\draw[thick,  red] (2-3,1) circle (2.2pt);
\draw[thick,  teal] (2-3,2) circle (2.2pt);

%0
\draw[fill, violet] (0,0) circle (2pt);
\draw[fill, red] (1,0) circle (2pt);
\draw[fill, teal] (2,0) circle (2pt);
\draw[fill, violet] (0,1) circle (2pt);
\draw[fill, red] (1,1) circle (2pt);
\draw[fill, teal] (2,1) circle (2pt);
\draw[fill, violet] (0,2) circle (2pt);
\draw[fill, red] (1,2) circle (2pt);
\draw[fill, teal] (2,2) circle (2pt);

\draw[ thick, violet] (0,0) circle (2.2pt);
\draw[thick,  red] (0,1) circle (2.2pt);
\draw[ thick,  teal] (0,2) circle (2.2pt);
\draw[thick,  violet] (1,0) circle (2.2pt);
\draw[thick,  red] (1,1) circle (2.2pt);
\draw[thick,  teal] (1,2) circle (2.2pt);
\draw[thick,  violet] (2,0) circle (2.2pt);
\draw[thick,  red] (2,1) circle (2.2pt);
\draw[thick,  teal] (2,2) circle (2.2pt);

%1y
\draw[fill, violet] (0,0+3) circle (2pt);
\draw[fill, red] (1,0+3) circle (2pt);
\draw[fill, teal] (2,0+3) circle (2pt);
\draw[fill, violet] (0,1+3) circle (2pt);
\draw[fill, red] (1,1+3) circle (2pt);
\draw[fill, teal] (2,1+3) circle (2pt);
\draw[fill, violet] (0,2+3) circle (2pt);
\draw[fill, red] (1,2+3) circle (2pt);
\draw[fill, teal] (2,2+3) circle (2pt);

\draw[ thick, violet] (0,0+3) circle (2.2pt);
\draw[thick,  red] (0,1+3) circle (2.2pt);
\draw[ thick,  teal] (0,2+3) circle (2.2pt);
\draw[thick,  violet] (1,0+3) circle (2.2pt);
\draw[thick,  red] (1,1+3) circle (2.2pt);
\draw[thick,  teal] (1,2+3) circle (2.2pt);
\draw[thick,  violet] (2,0+3) circle (2.2pt);
\draw[thick,  red] (2,1+3) circle (2.2pt);
\draw[thick,  teal] (2,2+3) circle (2.2pt);

%1x
\draw[fill, violet] (0+3,0) circle (2pt);
\draw[fill, red] (1+3,0) circle (2pt);
\draw[fill, teal] (2+3,0) circle (2pt);
\draw[fill, violet] (0+3,1) circle (2pt);
\draw[fill, red] (1+3,1) circle (2pt);
\draw[fill, teal] (2+3,1) circle (2pt);
\draw[fill, violet] (0+3,2) circle (2pt);
\draw[fill, red] (1+3,2) circle (2pt);
\draw[fill, teal] (2+3,2) circle (2pt);

\draw[ thick, violet] (0+3,0) circle (2.2pt);
\draw[thick,  red] (0+3,1) circle (2.2pt);
\draw[ thick,  teal] (0+3,2) circle (2.2pt);
\draw[thick,  violet] (1+3,0) circle (2.2pt);
\draw[thick,  red] (1+3,1) circle (2.2pt);
\draw[thick,  teal] (1+3,2) circle (2.2pt);
\draw[thick,  violet] (2+3,0) circle (2.2pt);
\draw[thick,  red] (2+3,1) circle (2.2pt);
\draw[thick,  teal] (2+3,2) circle (2.2pt);

%topright
\draw[fill, violet] (0+3,0+3) circle (2pt);
\draw[fill, red] (1+3,0+3) circle (2pt);
\draw[fill, teal] (2+3,0+3) circle (2pt);
\draw[fill, violet] (0+3,1+3) circle (2pt);
\draw[fill, red] (1+3,1+3) circle (2pt);
\draw[fill, teal] (2+3,1+3) circle (2pt);
\draw[fill, violet] (0+3,2+3) circle (2pt);
\draw[fill, red] (1+3,2+3) circle (2pt);
\draw[fill, teal] (2+3,2+3) circle (2pt);

\draw[ thick, violet] (0+3,0+3) circle (2.2pt);
\draw[thick,  red] (0+3,1+3) circle (2.2pt);
\draw[ thick,  teal] (0+3,2+3) circle (2.2pt);
\draw[thick,  violet] (1+3,0+3) circle (2.2pt);
\draw[thick,  red] (1+3,1+3) circle (2.2pt);
\draw[thick,  teal] (1+3,2+3) circle (2.2pt);
\draw[thick,  violet] (2+3,0+3) circle (2.2pt);
\draw[thick,  red] (2+3,1+3) circle (2.2pt);
\draw[thick,  teal] (2+3,2+3) circle (2.2pt);

%btmright
\draw[fill, violet] (0+3,0-3) circle (2pt);
\draw[fill, red] (1+3,0-3) circle (2pt);
\draw[fill, teal] (2+3,0-3) circle (2pt);
\draw[fill, violet] (0+3,1-3) circle (2pt);
\draw[fill, red] (1+3,1-3) circle (2pt);
\draw[fill, teal] (2+3,1-3) circle (2pt);
\draw[fill, violet] (0+3,2-3) circle (2pt);
\draw[fill, red] (1+3,2-3) circle (2pt);
\draw[fill, teal] (2+3,2-3) circle (2pt);

\draw[ thick, violet] (0+3,0-3) circle (2.2pt);
\draw[thick,  red] (0+3,1-3) circle (2.2pt);
\draw[ thick,  teal] (0+3,2-3) circle (2.2pt);
\draw[thick,  violet] (1+3,0-3) circle (2.2pt);
\draw[thick,  red] (1+3,1-3) circle (2.2pt);
\draw[thick,  teal] (1+3,2-3) circle (2.2pt);
\draw[thick,  violet] (2+3,0-3) circle (2.2pt);
\draw[thick,  red] (2+3,1-3) circle (2.2pt);
\draw[thick,  teal] (2+3,2-3) circle (2.2pt);

\end{tikzpicture}\]

We begin with the universal cover of $H(2,3)$ depicted above, where vertices are $v_{ij}, v\in V(H_2,3),$ $ i, j\in \mathbb{Z}$ and incidence is determined by $(2,x)_{ij} \sim (0,x)_{(i+1)j}$ and with covering map $\rho:U\to H(2,3), $ $ (x,y)_{ij}\mapsto (x,y)$. Comparing this picture to Example \ref{H(2,3)}, we see that if we lift $(0,0)$ to say $(0,0)_{00}$, then any walk in $H(2,3)$ lifts to a walk in $U$.  Moreover, every spider move of a walk lifts to a spider move in $U$ as well since spider moves correspond to switching the order of a first and second coordinate change \cite{CS3}.  Moreover, a walk $W$ in $H(2,3)$ starting at $(0,0)$ lifts to a walk starting and ending at $(0,0)_{0,0}$ if and only if $W$ is equivalent to the trivial walk, since ground walks $U_1, U_2$ lift to moving to the right or up by 3 places respectively. Since $|\Pi(H(d,3))|=\infty$, the universal cover is also infinite, so repeating vertices in $W$ will lift to unique vertices in $U$.

\end{example}

\begin{example}\label{E:2 horizontal}

\[\begin{tikzpicture}
    \node (A) at (-5,0){\begin{tikzpicture}[scale=1]

\foreach \i in {0,...,1}{
\foreach \x in {0,...,2}{
\foreach \y in {0,...,2}{

\draw (\x+3*\i,\y) --node[above right]{\tiny $(\x, \y)_{\i}$} (\x+3*\i, \y);

}
}
};

\draw (0,0)--(5,0);
\draw (0,2)--(5,2);
\draw (0,1)--(5,1);

\foreach \x in {0,...,5}{
\draw (\x, 0)--(\x, 2);
\draw (\x,0)edge[bend left](\x,2);
};

\foreach \y in {0,...,2}{
\draw (0,\y)edge[bend right](5,\y);
%\draw (1,\y)edge[bend right](3,\y);
%\draw (2,\y)edge[bend right](4,\y);
%\draw (3,\y)edge[bend right](5,\y);

};

\draw[fill, violet] (0,0) circle (2pt);
\draw[fill, red] (1,0) circle (2pt);
\draw[fill, teal] (2,0) circle (2pt);
\draw[fill, violet] (0,1) circle (2pt);
\draw[fill, red] (1,1) circle (2pt);
\draw[fill, teal] (2,1) circle (2pt);
\draw[fill, violet] (0,2) circle (2pt);
\draw[fill, red] (1,2) circle (2pt);
\draw[fill, teal] (2,2) circle (2pt);

\draw[ thick, violet] (0,0) circle (2.2pt);
\draw[thick,  red] (0,1) circle (2.2pt);
\draw[ thick,  teal] (0,2) circle (2.2pt);
\draw[thick,  violet] (1,0) circle (2.2pt);
\draw[thick,  red] (1,1) circle (2.2pt);
\draw[thick,  teal] (1,2) circle (2.2pt);
\draw[thick,  violet] (2,0) circle (2.2pt);
\draw[thick,  red] (2,1) circle (2.2pt);
\draw[thick,  teal] (2,2) circle (2.2pt);

%1

\draw[fill, violet] (0+3,0) circle (2pt);
\draw[fill, red] (1+3,0) circle (2pt);
\draw[fill, teal] (2+3,0) circle (2pt);
\draw[fill, violet] (0+3,1) circle (2pt);
\draw[fill, red] (1+3,1) circle (2pt);
\draw[fill, teal] (2+3,1) circle (2pt);
\draw[fill, violet] (0+3,2) circle (2pt);
\draw[fill, red] (1+3,2) circle (2pt);
\draw[fill, teal] (2+3,2) circle (2pt);

\draw[ thick, violet] (0+3,0) circle (2.2pt);
\draw[thick,  red] (0+3,1) circle (2.2pt);
\draw[ thick,  teal] (0+3,2) circle (2.2pt);
\draw[thick,  violet] (1+3,0) circle (2.2pt);
\draw[thick,  red] (1+3,1) circle (2.2pt);
\draw[thick,  teal] (1+3,2) circle (2.2pt);
\draw[thick,  violet] (2+3,0) circle (2.2pt);
\draw[thick,  red] (2+3,1) circle (2.2pt);
\draw[thick,  teal] (2+3,2) circle (2.2pt);

\end{tikzpicture}};

\node (B) at (3,0){\begin{tikzpicture}[scale=1]

\foreach \i in {0,...,1}{
\foreach \x in {0,...,2}{
\foreach \y in {0,...,2}{

\draw (\x,\y+3*\i) --node[above right]{\tiny $(\x, \y)_{\i}$} (\x, \y+3*\i);

}
}
};

\draw (0,0)--(0,5);
\draw (1,0)--(1,5);
\draw (2,0)--(2,5);

\foreach \y in {0,...,5}{
\draw (0, \y)--(2, \y);
\draw (0,\y)edge[bend right](2,\y);
};

\foreach \x in {0,...,2}{
\draw (\x,0)edge[bend left](\x,5);
%\draw (\x,1)edge[bend left](\x,3);
%\draw (\x,2)edge[bend left](\x,4);
%\draw (\x,3)edge[bend left](\x,5);
};

%0
\draw[fill, violet] (0,0) circle (2pt);
\draw[fill, red] (1,0) circle (2pt);
\draw[fill, teal] (2,0) circle (2pt);
\draw[fill, violet] (0,1) circle (2pt);
\draw[fill, red] (1,1) circle (2pt);
\draw[fill, teal] (2,1) circle (2pt);
\draw[fill, violet] (0,2) circle (2pt);
\draw[fill, red] (1,2) circle (2pt);
\draw[fill, teal] (2,2) circle (2pt);

\draw[ thick, violet] (0,0) circle (2.2pt);
\draw[thick,  red] (0,1) circle (2.2pt);
\draw[ thick,  teal] (0,2) circle (2.2pt);
\draw[thick,  violet] (1,0) circle (2.2pt);
\draw[thick,  red] (1,1) circle (2.2pt);
\draw[thick,  teal] (1,2) circle (2.2pt);
\draw[thick,  violet] (2,0) circle (2.2pt);
\draw[thick,  red] (2,1) circle (2.2pt);
\draw[thick,  teal] (2,2) circle (2.2pt);

%1
\draw[fill, violet] (0,0+3) circle (2pt);
\draw[fill, red] (1,0+3) circle (2pt);
\draw[fill, teal] (2,0+3) circle (2pt);
\draw[fill, violet] (0,1+3) circle (2pt);
\draw[fill, red] (1,1+3) circle (2pt);
\draw[fill, teal] (2,1+3) circle (2pt);
\draw[fill, violet] (0,2+3) circle (2pt);
\draw[fill, red] (1,2+3) circle (2pt);
\draw[fill, teal] (2,2+3) circle (2pt);

\draw[ thick, violet] (0,0+3) circle (2.2pt);
\draw[thick,  red] (0,1+3) circle (2.2pt);
\draw[ thick,  teal] (0,2+3) circle (2.2pt);
\draw[thick,  violet] (1,0+3) circle (2.2pt);
\draw[thick,  red] (1,1+3) circle (2.2pt);
\draw[thick,  teal] (1,2+3) circle (2.2pt);
\draw[thick,  violet] (2,0+3) circle (2.2pt);
\draw[thick,  red] (2,1+3) circle (2.2pt);
\draw[thick,  teal] (2,2+3) circle (2.2pt);

\end{tikzpicture}};

\end{tikzpicture}\]

Observe now the following covers of $H(2,3)$: $U/\langle U_1^2,U_2\rangle$ and $U/\langle U_1,U_2^2\rangle$. In the first cover, we see that every walk in $H(2,3)$ lifts to a walk in $U/\langle U_1^2,U_2\rangle$. Observe that in this cover a walk in $H(2,3)$ starting at $(0,0)$ lifts to a walk starting and ending at $(0,0)_{0,0}$ when $W$ is equivalent to the product of 2 horizontal ground walks and one vertical ground walk. The second cover follows similarly. In this cover, a walk $W$ in $H(2,3)$ starting at $(0,0)$ lifts to a walk starting and ending at $(0,0)_{0,0}$ when $W$ is equivalent to the product of 2 vertical ground walks and 1 horizontal ground walk.

\end{example}

\begin{example}\label{E:3 horiz 2 vert}

\[\begin{tikzpicture}[scale=1]

\foreach \i in {0,...,1}{
\foreach \j in {-1,...,1}{
\foreach \x in {0,...,2}{
\foreach \y in {0,...,2}{

%\draw (\x+3*\j,\y+3*\i) --node[above right]{\tiny $(\x, \y)_{\j,\i}$} (\x+3*\j, \y+3*\i);
}
}
}
};

\foreach \x in {-3,...,5}{
\draw (\x, 0)--(\x, 5);
};

\foreach \y in {0,...,5}{
\draw (-3, \y)--(5, \y);
};

\foreach \x in {-3,...,5}{
\draw (\x,0)edge[bend left](\x,5);
%\draw (\x,1)edge[bend left](\x,3);
%\draw (\x,2)edge[bend left](\x,4);
%\draw (\x,3)edge[bend left](\x,5);

};

\foreach \y in {0,...,5}{
\draw (-3,\y)edge[bend right](5,\y);
%\draw (-2,\y)edge[bend right](0,\y);
%\draw (-1,\y)edge[bend right](1,\y);
%\draw (0,\y)edge[bend right](2,\y);
%\draw (1,\y)edge[bend right](3,\y);
%\draw (2,\y)edge[bend right](4,\y);
%\draw (3,\y)edge[bend right](5,\y);

};

%-1x
\draw[fill, violet] (0-3,0) circle (2pt);
\draw[fill, red] (1-3,0) circle (2pt);
\draw[fill, teal] (2-3,0) circle (2pt);
\draw[fill, violet] (0-3,1) circle (2pt);
\draw[fill, red] (1-3,1) circle (2pt);
\draw[fill, teal] (2-3,1) circle (2pt);
\draw[fill, violet] (0-3,2) circle (2pt);
\draw[fill, red] (1-3,2) circle (2pt);
\draw[fill, teal] (2-3,2) circle (2pt);

\draw[ thick, violet] (0-3,0) circle (2.2pt);
\draw[thick,  red] (0-3,1) circle (2.2pt);
\draw[ thick,  teal] (0-3,2) circle (2.2pt);
\draw[thick,  violet] (1-3,0) circle (2.2pt);
\draw[thick,  red] (1-3,1) circle (2.2pt);
\draw[thick,  teal] (1-3,2) circle (2.2pt);
\draw[thick,  violet] (2-3,0) circle (2.2pt);
\draw[thick,  red] (2-3,1) circle (2.2pt);
\draw[thick,  teal] (2-3,2) circle (2.2pt);

%topleft
\draw[fill, violet] (0-3,0+3) circle (2pt);
\draw[fill, red] (1-3,0+3) circle (2pt);
\draw[fill, teal] (2-3,0+3) circle (2pt);
\draw[fill, violet] (0-3,1+3) circle (2pt);
\draw[fill, red] (1-3,1+3) circle (2pt);
\draw[fill, teal] (2-3,1+3) circle (2pt);
\draw[fill, violet] (0-3,2+3) circle (2pt);
\draw[fill, red] (1-3,2+3) circle (2pt);
\draw[fill, teal] (2-3,2+3) circle (2pt);

\draw[ thick, violet] (0-3,0+3) circle (2.2pt);
\draw[thick,  red] (0-3,1+3) circle (2.2pt);
\draw[ thick,  teal] (0-3,2+3) circle (2.2pt);
\draw[thick,  violet] (1-3,0+3) circle (2.2pt);
\draw[thick,  red] (1-3,1+3) circle (2.2pt);
\draw[thick,  teal] (1-3,2+3) circle (2.2pt);
\draw[thick,  violet] (2-3,0+3) circle (2.2pt);
\draw[thick,  red] (2-3,1+3) circle (2.2pt);
\draw[thick,  teal] (2-3,2+3) circle (2.2pt);

%0
\draw[fill, violet] (0,0) circle (2pt);
\draw[fill, red] (1,0) circle (2pt);
\draw[fill, teal] (2,0) circle (2pt);
\draw[fill, violet] (0,1) circle (2pt);
\draw[fill, red] (1,1) circle (2pt);
\draw[fill, teal] (2,1) circle (2pt);
\draw[fill, violet] (0,2) circle (2pt);
\draw[fill, red] (1,2) circle (2pt);
\draw[fill, teal] (2,2) circle (2pt);

\draw[ thick, violet] (0,0) circle (2.2pt);
\draw[thick,  red] (0,1) circle (2.2pt);
\draw[ thick,  teal] (0,2) circle (2.2pt);
\draw[thick,  violet] (1,0) circle (2.2pt);
\draw[thick,  red] (1,1) circle (2.2pt);
\draw[thick,  teal] (1,2) circle (2.2pt);
\draw[thick,  violet] (2,0) circle (2.2pt);
\draw[thick,  red] (2,1) circle (2.2pt);
\draw[thick,  teal] (2,2) circle (2.2pt);

%1
\draw[fill, violet] (0,0+3) circle (2pt);
\draw[fill, red] (1,0+3) circle (2pt);
\draw[fill, teal] (2,0+3) circle (2pt);
\draw[fill, violet] (0,1+3) circle (2pt);
\draw[fill, red] (1,1+3) circle (2pt);
\draw[fill, teal] (2,1+3) circle (2pt);
\draw[fill, violet] (0,2+3) circle (2pt);
\draw[fill, red] (1,2+3) circle (2pt);
\draw[fill, teal] (2,2+3) circle (2pt);

\draw[ thick, violet] (0,0+3) circle (2.2pt);
\draw[thick,  red] (0,1+3) circle (2.2pt);
\draw[ thick,  teal] (0,2+3) circle (2.2pt);
\draw[thick,  violet] (1,0+3) circle (2.2pt);
\draw[thick,  red] (1,1+3) circle (2.2pt);
\draw[thick,  teal] (1,2+3) circle (2.2pt);
\draw[thick,  violet] (2,0+3) circle (2.2pt);
\draw[thick,  red] (2,1+3) circle (2.2pt);
\draw[thick,  teal] (2,2+3) circle (2.2pt);

%1x
\draw[fill, violet] (0+3,0) circle (2pt);
\draw[fill, red] (1+3,0) circle (2pt);
\draw[fill, teal] (2+3,0) circle (2pt);
\draw[fill, violet] (0+3,1) circle (2pt);
\draw[fill, red] (1+3,1) circle (2pt);
\draw[fill, teal] (2+3,1) circle (2pt);
\draw[fill, violet] (0+3,2) circle (2pt);
\draw[fill, red] (1+3,2) circle (2pt);
\draw[fill, teal] (2+3,2) circle (2pt);

\draw[ thick, violet] (0+3,0) circle (2.2pt);
\draw[thick,  red] (0+3,1) circle (2.2pt);
\draw[ thick,  teal] (0+3,2) circle (2.2pt);
\draw[thick,  violet] (1+3,0) circle (2.2pt);
\draw[thick,  red] (1+3,1) circle (2.2pt);
\draw[thick,  teal] (1+3,2) circle (2.2pt);
\draw[thick,  violet] (2+3,0) circle (2.2pt);
\draw[thick,  red] (2+3,1) circle (2.2pt);
\draw[thick,  teal] (2+3,2) circle (2.2pt);

%topright
\draw[fill, violet] (0+3,0+3) circle (2pt);
\draw[fill, red] (1+3,0+3) circle (2pt);
\draw[fill, teal] (2+3,0+3) circle (2pt);
\draw[fill, violet] (0+3,1+3) circle (2pt);
\draw[fill, red] (1+3,1+3) circle (2pt);
\draw[fill, teal] (2+3,1+3) circle (2pt);
\draw[fill, violet] (0+3,2+3) circle (2pt);
\draw[fill, red] (1+3,2+3) circle (2pt);
\draw[fill, teal] (2+3,2+3) circle (2pt);

\draw[ thick, violet] (0+3,0+3) circle (2.2pt);
\draw[thick,  red] (0+3,1+3) circle (2.2pt);
\draw[ thick,  teal] (0+3,2+3) circle (2.2pt);
\draw[thick,  violet] (1+3,0+3) circle (2.2pt);
\draw[thick,  red] (1+3,1+3) circle (2.2pt);
\draw[thick,  teal] (1+3,2+3) circle (2.2pt);
\draw[thick,  violet] (2+3,0+3) circle (2.2pt);
\draw[thick,  red] (2+3,1+3) circle (2.2pt);
\draw[thick,  teal] (2+3,2+3) circle (2.2pt);

\end{tikzpicture}\]

In this last cover we have $U/\langle U_1^3,U_2^2\rangle$. In this cover, a walk $W$ in $H(2,3)$ starting at $(0,0)$ lifts to a walk starting and ending at $(0,0)_{0,0}$ when $W$ is equivalent to the product of 2 vertical ground walks and 3 horizontal ground walks.

\end{example}

\begin{example}\label{E:UCH24}
Consider $H(2,4)$:

\[\begin{tikzpicture}

\foreach \x in {0,...,3}{
    \foreach \y in {0,...,3}{

    };
};

\foreach \x in {0,...,3}{
\draw (\x, 0)--(\x, 3);
\draw (\x,0)edge[bend left](\x,2);
\draw (\x,1)edge[bend left](\x,3);
\draw (\x,0)edge[bend left](\x,3);

};

\draw (0,0)--(3,0);
\draw (0,2)--(3,2);
\draw (0,1)--(3,1);
\draw (0,3)--(3,3);

\foreach \y in {0,...,3}{
\draw (0, \y)--(3, \y);
\draw (0,\y)edge[bend right](2,\y);
\draw (1,\y)edge[bend right](3,\y);
\draw (0,\y)edge[bend right](3,\y);

};

\draw[fill, black] (0,0) circle (2pt);
\draw[fill, black] (1,0) circle (2pt);
\draw[fill, black] (2,0) circle (2pt);
\draw[fill, black] (0,1) circle (2pt);
\draw[fill, black] (1,1) circle (2pt);
\draw[fill, black] (2,1) circle (2pt);
\draw[fill, black] (0,2) circle (2pt);
\draw[fill, black] (1,2) circle (2pt);
\draw[fill, black] (2,2) circle (2pt);
\draw[fill, black] (3,0) circle (2pt);
\draw[fill, black] (3,1) circle (2pt);
\draw[fill, black] (3,2) circle (2pt);
\draw[fill, black] (0,3) circle (2pt);
\draw[fill, black] (1,3) circle (2pt);
\draw[fill, black] (2,3) circle (2pt);
\draw[fill, black] (3,3) circle (2pt);

 \end{tikzpicture}\]
 
The universal cover of $H(2,4)$ is finite since $\Pi(H(2,4))\cong \mathbb{Z}_2\times \mathbb{Z}_2$.  Since the $|\Pi(H(2,4))|=4$, the universal cover of $H(2,4)$ is the following 4-cover:

\[
\begin{tikzpicture}

    \foreach \x in {0,...,3}{
            \foreach \y in {0,...,3}{
            \foreach \i in {0,1}{
                \draw (\x,\y+5*\i) edge[bend right] ({Mod(\x+1,4)+5} , \y+5*\i);
                \draw (\x,\y+5*\i) edge[bend right] ({Mod(\x+2,4)+5} , \y+5*\i);
                \draw (\x,\y+5*\i) edge[bend right] ({Mod(\x+3,4)+5} , \y+5*\i);

                \draw (\x+5*\i,\y) edge[bend left] (\x+5*\i, {Mod(\y+1,4)+5} );
                \draw (\x+5*\i,\y) edge[bend left] (\x+5*\i, {Mod(\y+2,4)+5});
                \draw (\x+5*\i,\y) edge[bend left] (\x+5*\i, {Mod(\y+3,4)+5} );

                }
        }
            
    }

    \foreach \i in {0,...,1}{
        \foreach \j in {0,...,1}{
        
            \foreach \x in {0,...,3}{
                \foreach \y in {0,...,3}{
                \draw[fill, blue] (\x+5*\i,\y+5*\j) 
 circle (2pt);

                }
            
            }
        }
    }

\end{tikzpicture}
\]

    We can see that either ground walk $U_1, U_2$ would lift to a walk that starts at $(0,0)_{0,0}$ that ends at $(0,0)_{1,0}$ or $(0,0)_{0,1}$, and that $U_i^2$ lifts to a walk ending at $(0,0)_{0,0}$.

\end{example}

\section{Conclusions and Future Directions}\label{S:Conclusion}
%Ground walks, which are generators of complete graphs; remind abt ground walks
%pleats, when q=2 (own universal cover)

To conclude, we found that the fundamental groups of Hamming graphs $H(q,d)$ are trivial when $q=2$, $\mathbb{Z}^d$ when $q=3$ and $\mathbb{Z}_2^d$ when $q>3$. We also showed that all covers of Hamming graphs are generated by ground walks, which are walks of length 3 with non-zero coordinate $i$, and these generators commute and are unique. Thus, Hamming graphs are cartesian or box products of complete graphs. \\

In this research, we computed the fundamental group for a more complex family of graphs than those previously studied under this x-homotopy. In continuing this work, we would like to extend this investigation to include more families of graphs with similar or greater complexity. In a work in progress, Chih and Scull generalized some of these results to show that $\Pi(G\ \square\ H)\cong \Pi(G)\times \Pi(H)$.\\

\section{Acknowledgments}

We wish to thank Dr.\ Laura Scull for their comments and insight, Dr.\ James Nagy for being supportive of this project, the Mathematical Association of America for recognizing this work at MathFest 2023, and the referee for their helpful comments.

%Fundamental groups of box products are also studied and can be applied to Hamming graphs. The box product of two graphs $G$ and $H$ is the Cartesian product of the vertices within these graphs. We can show that $\Pi( G\square H)= \Pi G \times \Pi H$ We can think of $H(q,d)$ as $K_d \square K_d \square \ldots \square K_d$, where the number of factors is equal to $q$. Since we are able to compute the fundamental groups for $H(q,d)$, we can also find the fundamental groups of it's box product.  

% BibTeX users please use one of
%\bibliographystyle{spbasic}      % basic style, author-year citations
\bibliographystyle{spmpsci}      % mathematics and physical sciences
\bibliography{ref}   % name your BibTeX data base

\end{document}